\documentclass[pdflatex,sn-mathphys-num]{sn-jnl}% Math and Physical Sciences Numbered Reference Style
\usepackage{graphicx}%
\usepackage{multirow}%
\usepackage{amsmath,amssymb,amsfonts}%
\usepackage{amsthm}%
\usepackage{mathrsfs}%
\usepackage[title]{appendix}%
\usepackage{xcolor}%
\usepackage{textcomp}%
\usepackage{manyfoot}%
\usepackage{booktabs}%
\usepackage{algorithm}%
\usepackage{algorithmicx}%
\usepackage{algpseudocode}%
\usepackage{listings}%
\numberwithin{equation}{section}

\usepackage{tikz}

\theoremstyle{plain}
\newtheorem{theorem}{Theorem}[section]
\newtheorem{lemma}[theorem]{Lemma}
\newtheorem{corollary}[theorem]{Corollary}
\newtheorem{proposition}[theorem]{Proposition}
\newtheorem{conjecture}[theorem]{Conjecture}

\newtheorem{algorithm }[theorem]{Algorithm}

\theoremstyle{definition}
\newtheorem{definition}[theorem]{Definition}

\newtheorem{problem}[theorem]{Problem}

\newtheorem{remark}[theorem]{Remark}

\newtheorem{case[theorem]}{Case}

\begin{document}

\title[Mapping properties of the $\mathcal{S}$-operator]{Mapping properties of the $\mathcal{S}$-operator}

%%=============================================================%%
%% GivenName	-> \fnm{Joergen W.}
%% Particle	-> \spfx{van der} -> surname prefix
%% FamilyName	-> \sur{Ploeg}
%% Suffix	-> \sfx{IV}
%% \author*[1,2]{\fnm{Joergen W.} \spfx{van der} \sur{Ploeg} 
%%  \sfx{IV}}\email{iauthor@gmail.com}
%%=============================================================%%

\author[1]{\fnm{Hunseok} \sur{Kang}}\email{hunseok.kang@aum.edu.kw}
\equalcont{These authors contributed equally to this work.}

\author*[2]{\fnm{Doowon} \sur{Koh}}\email{koh131@chungbuk.ac.kr}
\equalcont{These authors contributed equally to this work.}

\author[2]{\fnm{Changhun} \sur{Yang}}\email{chyang@chungbuk.ac.kr}
\equalcont{These authors contributed equally to this work.}

\affil[1]{\orgdiv{College of Engineering and Technology}, \orgname{American University of the Middle East}, \orgaddress{\city{Egaila}, \postcode{54200}, \country{Kuwait}}}

\affil[2]{\orgdiv{Department of Mathematics}, \orgname{Chungbuk National University}, \orgaddress{\city{Cheongju}, \postcode{28644}, \country{Korea}}}

%%==================================%%
%% Sample for unstructured abstract %%
%%==================================%%

\abstract{In this paper, we study the $\ell^p\to \ell^r$ estimates for the $\mathcal{S}$-operator arising in restriction problems for spheres over finite fields. We establish a necessary and sufficient condition for the boundedness of the $\mathcal{S}$-operator. Furthermore, we investigate this problem under certain restrictions on test functions. In particular, we address the sharp results  when test functions are restricted to radial functions.}

\keywords{The $\mathcal{S}$-operator, Finite field, Boundedness}

%%\pacs[JEL Classification]{D8, H51}

\pacs[MSC Classification]{42B05, 43A32, 43A15}

\maketitle

\section{Introduction}%\label{sec1}

\subsection{Historical background of finite field restriction theory}

The restriction conjecture in harmonic analysis, originally formulated by Elias Stein \cite{St78} in the 1970s, has been one of the most influential problems in modern mathematical analysis. In the classical Euclidean setting, the restriction problem concerns the boundedness of operators that restrict the Fourier transform to curved submanifolds such as spheres, paraboloids, and cones. The fundamental question asks for the optimal range of exponents $(p,r)$ such that the restriction operator $R_M: L^p(\mathbb{R}^d) \to L^r(M, d\sigma)$ is bounded, where $M$ is a smooth manifold and $d\sigma$ is the induced measure. Since its close connection to other fundamental problems in harmonic analysis, including the Kakeya problem and the Bochner-Riesz problem, was established, this problem has attracted significant research interest and is currently under active investigation. Nevertheless, it remains open except for the two-dimensional case, which was solved by Fefferman \cite{Fe70}. For recent progress on the restriction conjecture in Euclidean space, we refer the reader to the papers \cite{To75, Bo91, Wo01, Ta03, BD15, Gu16, GIOW20, WW24} and the references therein.

The work of Mockenhaupt and Tao \cite{MT04} in 2004 revolutionized this field by introducing finite field analogues of classical restriction problems. Let $\mathbb F_q^n , n\ge 2,$ be an $n$-dimensional vector space over a finite field $\mathbb F_q$ with $q$ elements. For each finite field $\mathbb F_q,$ we let $V_q$ be a fixed variety in $\mathbb F_q^n.$ For $1\le p, r\le \infty,$ we define $R_{V_q}(p\to r)$ as the smallest number such that the restriction estimate
$$ \|\widehat{f}\|_{L^r(V_q)} \le R_{V_q} (p\to r) \|f\|_{\ell^p(\mathbb F_q^n)}$$
holds for all functions $f: \mathbb F_q^n \to \mathbb C,$ where $\widehat{f}$ denotes the Fourier transform of  $f$ defined in Section \ref{secPre}. Here, and throughout the paper, $L^r(V_q)$ denotes the space of functions on $V_q$ with respect to the normalized counting measure, while $\ell^p(\mathbb{F}_q^n)$ denotes the space of functions on $\mathbb{F}_q^n$ with respect to the counting measure.

The restriction problem for the variety  $V_q$ asks for all exponents $1\le p, r\le \infty$ such that  $R_{V_q}(p\to r) \le C$ for some constant $C>0$ independent of $q.$ In other words,  we want to find all pairs $(p, r)$ with $1\le p, r \le \infty$ such that $R_{V_q}(p\to r)\lesssim 1.$ Here and throughout the paper, the notation $A \lesssim B$ means that there exists an absolute constant $C > 0$ such that $A \leq C B$. More precisely, we write $A \lesssim B$ if there exists a constant $C > 0$, independent of the relevant parameter $q$, such that the inequality $A \leq CB$ holds. The notation $A \gtrsim B$ is similarly defined to mean $B \lesssim A$, and $A \sim B$  means that both $A \lesssim B$ and $B \lesssim A$ hold simultaneously.

Mockenhaupt and Tao \cite{MT04} addressed reasonably good results on low-dimensional cones and paraboloids. We recall that the paraboloid $P$ and the cone $C$ in $\mathbb F_q^d$ are defined by
$$ P:=\left\{(x_1, \ldots, x_d) \in \mathbb F_q^d: x_d= \sum_{k=1}^{d-1} x_k^2\right\}$$
and $$ C:=\left\{(x_1, \ldots, x_d) \in \mathbb F_q^d: x_{d-1} x_d = \sum_{k=1}^{d-2} x_k^2\right\}.$$

In particular, they resolved the restriction conjecture for the paraboloid in two dimensions and the cone in three dimensions by showing that  $R_{V_q}(2\to 4) \lesssim 1$ holds.  In higher dimensions, their results on these two surfaces have been steadily improved with much interest. For example, results on paraboloids in three dimensions have been consistently improved by Lewko \cite{LL10, Le13, Le14, Le20, Le24} and Rudnev and Shkredov \cite{RS18}. On the other hand,  Iosevich, Lewko, and the second listed author have improved restriction estimates for high dimensional paraboloids (see, for instance, \cite{IK09, Ko20, IKL20}).  For cones, new research results have been addressed by  Lee, Pham and the second listed author \cite{KLP22}. In particular, they established the restriction conjecture for cones in $4$-dimensional space when $-1$ is not a square.

\subsection{Development of spherical restriction theory over finite fields}
Following the foundational work of Mockenhaupt and Tao \cite{MT04}, the study of restriction phenomena for specific algebraic varieties over finite fields has flourished. Iosevich and Koh  made significant early contributions, beginning with their 2008 work \cite{IK08} on restriction theorems for nondegenerate quadratic surfaces. Their systematic approach established sharp results for two-dimensional cases and achieved the Tomas-Stein exponent in higher dimensions, utilizing sophisticated estimates for the Kloosterman sums. The case of spheres has proven particularly rich and challenging. We recall that the sphere $S_j^{d-1}$ with a radius $j\in \mathbb F_q$ in $\mathbb F_q^d$ is defined by
$$S_j^{d-1}:=\left\{ (x_1, \ldots, x_d)\in \mathbb F_q^d: \sum_{k=1}^d x_k^2 =j\right\}.$$

Unlike the relatively well-understood paraboloid case, spherical restriction problems over finite fields involve more complex character sum estimates and exhibit different behavior depending on the radius and dimension. The work of Iosevich and Koh \cite{IK10} on spherical restriction theorems laid crucial groundwork, and a clearer statement of the restriction conjecture for spheres and further new results have recently been addressed by Koh, Pham, and Vinh \cite{KPV}. However, many fundamental questions still remain open, and the known results are much fewer compared to paraboloids and cones, making it considered a very difficult problem. 

\subsection{Application of finite spherical restriction problems to the Erd\H{o}s-Falconer distance problem}
While restriction problems on spheres have been more challenging and have attracted less attention than those on paraboloids or cones, they are proving to be far more valuable in terms of practical applicability. Although they can be applied to various problems, the most remarkable aspect is that restriction estimates for spheres can yield results for the Erd\H{o}s-Falconer distance problem introduced by Iosevich and Rudnev \cite{IR07}.  Let $E\subset \mathbb{F}_q^d.$ We recall that the Erd\H{o}s-Falconer distance problem, one of the most interesting and challenging problems in finite field discrete geometry, seeks to determine the smallest exponent $\alpha$ such that if $|E|\ge C q^\alpha$ for a sufficiently large constant $C$ independent of $q$, then $|\Delta(E)| \gtrsim q$,  where the distance set $\Delta(E)$  is defined by
$$ \Delta(E)=\left\{\|x-y\|:=\sum_{i=1}^d (x_i-y_i)^2: x, y\in E\right\}.$$

Iosevich and Rudnev \cite{IR07} showed that if $|E|>2q^{(d+1)/2},$ then $|\Delta(E)|=q.$ The sharpness of the exponent $(d+1)/2$ was established in \cite{HIKR10} for odd dimensions $d\ge 3$. In other words, the Erd\H{o}s-Falconer distance problem has been completely resolved for all odd dimensions $d \geq 3$. However, for even dimensions $d\ge 2$, it is conjectured that the exponent $(d+1)/2$ can be improved to $d/2$ (see  \cite{IR07, KKR25}).

\begin{conjecture}
Suppose that $d \geq 2$ is an even integer and $E \subset \mathbb{F}_q^d$. If $|E| \geq Cq^{d/2}$ for a sufficiently large constant $C$ independent of $q$, then $|\Delta(E)| \gtrsim q$.
\end{conjecture}
This conjecture remains open for all even dimensions.  For $d=2$ over general finite fields, it was established in \cite{CEHIK10, HLR, BHIPJR} that the $(d+1)/2$ (= $3/2$) exponent of Iosevich and Rudnev can be improved to $4/3$. Notably, when $\mathbb{F}_q$ is a prime field,  Murphy,  Petridis, Pham, Rudnev, and  Stevens \cite{MPPRS} achieved the same $5/4$ result that the authors \cite{GIOW20} had obtained for the Euclidean Falconer distance problem in two dimensions. However, no improvements over the $(d+1)/2$ exponent of Iosevich and Rudnev have been obtained for any higher even dimension $d \geq 4.$  To improve this result, we can exploit the connection between $L^2$ restriction estimates and the Erd\H{o}s-Falconer distance problem. For instance, Lemma 4.1 in \cite{CKP19} implies the following fact. 
\begin{proposition} [Lemma 4.1, \cite{CKP19}] \label{REFormula}
Suppose that there exists a constant $C$ independent of $q$ such that for all $j\in \mathbb F_q^*,$ it satisfies that $R_{S_j^{d-1}} \left(p\to 2\right)\leq C$ for some $1\le p\le \infty.$  Then for any $E\subset \mathbb F_q^d, d\ge2,$  we have
$$ |\Delta(E)| \gtrsim  \min\left\{q, ~~ \frac{|E|^{3-2/p}}{q^{d-1}}\right\}.$$ 
In particular, if $|E|\ge q^{\frac{dp}{3p-2}},$ then  $|\Delta(E)|\gtrsim q.$
\end{proposition}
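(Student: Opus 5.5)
Let $\nu(t)$ denote the number of pairs $(x,y)\in E\times E$ with $\|x-y\|=t$. The plan is to bound $\sum_{t\neq 0}\nu(t)^2$ from above using the hypothesis $R_{S_j^{d-1}}(p\to 2)\le C$. Cauchy--Schwarz will then force $\Delta(E)$ to be large:
$$ \big(|E|^2-\nu(0)\big)^2=\Big(\sum_{t\ne 0}\nu(t)\Big)^2\le |\Delta(E)|\sum_{t\neq0}\nu(t)^2 .$$
Here $\nu(0)$ counts pairs at zero distance, i.e.\ $y-x$ lies on the isotropic cone $S_0^{d-1}$. Trivially $\nu(0)\le |E|\,|S_0^{d-1}|\lesssim |E| q^{d-1}$. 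When $|E|\gg q^{d-1}$ this is at most $|E|^2/2$. In the complementary range the claimed bound $|E|^{3-2/p}/q^{d-1}$ is small, and a mild separate argument (for instance, the trivial bound $|\Delta(E)|\ge1$ together with $p\ge1$ and $|E|^{3-2/p}/q^{d-1}\lesssim |E|^{2-2/p}$) should suffice. I will not dwell on this bookkeeping.

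\textbf{Fourier expansion.} For $j\ne0$, write $\nu(j)=\sum_{x,y}E(x)E(y)S_j(x-y)$ with $S_j$ the indicator of $S_j^{d-1}$. Expanding by Fourier inversion gives
$$\nu(j)= q^{2d}\sum_m |\widehat{E}(m)|^2\widehat{S_j}(m)$$
(under the paper's normalization, with constants adjusted accordingly). Equivalently, and better suited to the $L^2(S_j)$ norm, write
$$\nu(j)=\sum_{x}E(x)\,(E*S_j)(x).$$
The key identity to use is
$$\sum_{j\ne0}\nu(j)^2\le \sum_{j}\Big(\sum_{x,y}E(x)E(y)S_j(x-y)\Big)^2 .$$
I would rewrite $\nu(j)$ in terms of $\widehat{E}$ restricted to $S_j^{d-1}$ via the dual (extension) formulation. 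Since $\widehat{S_j}$ is essentially a sphere again (Kloosterman-type sums), the more robust route is the following. By Plancherel in the variable $x-y$,
$$\nu(j)=|S_j|\cdot q^{d}\cdot \|\widehat{E}\|_{L^2(S_j^{d-1})}^2$$
up to normalization, where one uses the Fourier side of distances. This is exactly the Iosevich--Rudnev device: the number of pairs with $\|x-y\|=j$ is expressed via the mass of $\widehat E$ on spheres, and the error term comes from the cone.

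\textbf{Applying the hypothesis.} The restriction hypothesis gives
$$\|\widehat{E}\|_{L^2(S_j^{d-1})}\le C\|E\|_{\ell^p}=C|E|^{1/p}.$$
So each $\nu(j)$ is controlled by $|E|^{2/p}$ times normalization factors. I would not bound all $\nu(j)$ by this alone. Instead I would combine it with the orthogonality
$$\sum_j \|\widehat E\|^2_{L^2(S_j)}|S_j|\approx \|\widehat E\|_{\ell^2}^2\sim q^{-d}|E|.$$
This yields
$$\sum_j\nu(j)^2\le \max_j \nu(j)\cdot\sum_j\nu(j)\lesssim \big(q^{d-1}q^{-2d}\cdots|E|^{2/p}\big)\cdot|E|^2,$$
which after normalization should read $\sum_{j\ne0}\nu(j)^2\lesssim q^{d-1}|E|^{2/p}\cdot|E|/\ldots$. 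The target to aim for is
$$\sum_{j\ne 0}\nu(j)^2\lesssim \frac{|E|^4}{q}+q^{d-1}|E|^{1+2/p}.$$
The first term is the main term, coming from the zero frequency $\widehat E(0)$. Plugging the target into Cauchy--Schwarz gives
$$|\Delta(E)|\gtrsim\min\big\{q,\;|E|^{3-2/p}/q^{d-1}\big\}.$$
The final claim follows since $|E|^{3-2/p}\ge q^d$ exactly when $|E|\ge q^{dp/(3p-2)}$.

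\textbf{Main obstacle.} The main obstacle is the second step. It requires isolating the zero-frequency contribution, which gives $|E|^2|S_j|/q^d\approx |E|^2/q$ per $j$, and bounding the remaining nonzero frequencies through the restriction estimate, applied to $E$ itself rather than to its $\ell^2$ mass. I would handle this by writing $\widehat{S_j}(m)$ explicitly for $m\neq0$ as a Gauss--Kloosterman sum. Averaging over $j$ turns $\sum_j|\widehat{S_j}(m)|^2$-type expressions into sums over spheres in $m$, so that the restriction bound on $S_j^{d-1}$ can be invoked uniformly in $j\in\mathbb F_q^*$.
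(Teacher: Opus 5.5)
\textbf{There are genuine gaps.} The paper gives no proof of this statement (it imports the lemma from the reference), so the benchmark is the standard Iosevich--Rudnev-type argument. Against it, your central step is wrong. The identity $\nu(j)\approx|S_j^{d-1}|\,q^d\,\|\widehat{E}\|^2_{L^2(S_j^{d-1})}$ is false. In fact $\nu(j)=q^{-d}\sum_{m}|\widehat{E}(m)|^2\,\widehat{1_{S_j^{d-1}}}(m)$ pairs $|\widehat E|^2$ with the Fourier transform of the sphere, not with the sphere itself. For $E$ a single point, $\nu(j)=0$ for every $j\neq0$ while $\|\widehat{E}\|_{L^2(S_j^{d-1})}=1$.

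Hence the hypothesis gives no control of $\nu(j)$, and bounding $\max_j\nu(j)\cdot\sum_j\nu(j)$ cannot reach your target; this is why your exponents never close. The hypothesis controls the frequency-side masses $A(r):=\sum_{m\in S_r^{d-1}}|\widehat{E}(m)|^2$, and that is where the max-times-sum trick belongs. Concretely, write $\nu(t)=q^{-1}\sum_{s}\chi(-st)\Phi(s)$ with $\Phi(s)=\sum_{x,y\in E}\chi(s\|x-y\|)$, and use Plancherel in $t$. For $s\neq0$, completing squares with Gauss sums gives $|\Phi(s)|=q^{-d/2}\bigl|\sum_{r}\chi(-r/(4s))A(r)\bigr|$, and Plancherel in $u=-1/(4s)$ yields
\[
\sum_{t\in\mathbb{F}_q}\nu(t)^2\le\frac{|E|^4}{q}+q^{-d}\sum_{r\in\mathbb{F}_q}A(r)^2 .
\]
Then $A(r)\le C^2|S_r^{d-1}|\,|E|^{2/p}$ for $r\neq0$ and $\sum_rA(r)=q^d|E|$ give $q^{-d}\sum_{r\neq0}A(r)^2\lesssim q^{d-1}|E|^{1+2/p}$. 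Your last paragraph gestures at this, but it is the whole proof and is not carried out.

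Second, what you set aside as bookkeeping is the real obstruction. The sum over frequency spheres contains the null cone $r=0$, which the hypothesis ($j\in\mathbb{F}_q^*$ only) does not cover. Your proposal never treats it, since the final paragraph invokes the hypothesis ``uniformly in $j\in\mathbb{F}_q^*$''.

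In odd $d$ the cone term is harmless. Gauss sums give $A(0)\le q^{d-1}|E|+q^{(d-1)/2}|E|^2$, so $q^{-d}A(0)^2\lesssim|E|^4/q+q^{d-2}|E|^2$. Because $0\in\Delta(E)$, you can use $|E|^4\le|\Delta(E)|\sum_t\nu(t)^2$ without ever removing $\nu(0)$.

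In even $d$, however, $A(0)=q^{d-1}|E|\pm q^{d/2-1}\bigl(q\nu(0)-|E|^2\bigr)$, so $q^{-d}A(0)^2\approx\nu(0)^2$. You must then apply Cauchy--Schwarz over $t\neq0$; the $\nu(0)^2$ cancels, leaving extra terms $\lesssim|E|^4/q+q^{d/2-1}|E|^3+q^{d-2}|E|^2$. You also need $\nu(0)\le|E|^2/2$. The decay of $\widehat{1_{S_0^{d-1}}}$ provides this, via $\nu(0)\lesssim|E|^2/q+q^{d/2}|E|$, only when $|E|\gg q^{d/2}$. Your crude bound $\nu(0)\le|E|q^{d-1}$ requires $|E|\gg q^{d-1}$. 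The fallback $|\Delta(E)|\ge1$ fails below that, because $|E|^{3-2/p}/q^{d-1}$ is not $O(1)$ there: it equals $q^{1/2}$ for $d=2$, $p=4/3$, $|E|=q$.

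In fact no argument can give the display for all $E$ in even dimensions. For $d=2$, $q\equiv1\pmod 4$, the hypothesis holds with $p=4/3$. Yet the isotropic line $E=\{(t,it):t\in\mathbb{F}_q\}$, $i^2=-1$, has $|E|=q$ and $\Delta(E)=\{0\}$, whereas the display predicts $|\Delta(E)|\gtrsim q^{1/2}$. A totally isotropic plane in $\mathbb{F}_q^4$ with $p=10/7$ behaves the same way.

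What the corrected argument proves is the display for odd $d$, and for even $d$ under $|E|\ge Cq^{d/2}$. Testing $f(x)=\chi(a\cdot x)$ with $a\in S_j^{d-1}$ forces $p\le 2d/(d+1)<2$, hence $dp/(3p-2)>d/2$. So the ``in particular'' statement does follow.
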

By combining this proposition with the sharp restriction estimate \(R_{S_j^1}\left(\frac{4}{3} \to 2\right) \lesssim 1\), the authors in  \cite{CEHIK10} obtained the exponent \(4/3\), which improves upon the bound \((d+1)/2\) when \(d=2\).  In general odd dimensions $d \geq 3$ with $j \neq 0$, it was shown in \cite{IK08} that $p = \frac{2d+2}{d+3}$ is the sharp exponent for the bound $R_{S_j^{d-1}}(p \to 2) \lesssim 1$. This can be combined with Proposition \ref{REFormula} to recover the sharp exponent $(d+1)/2$ for the Erd\H{o}s--Falconer distance problem in odd dimensions $d \geq 3$. It is conjectured that in even dimensions $d \ge 4$, the value $p = \frac{2d+2}{d+3}$ from the sharp $L^2$ restriction theorem in odd dimensions can be improved to a larger exponent, but such an improvement remains open.  

Notice that  to break down the currently known $(d+1)/2$ Erd\H{o}s-Falconer distance result for even high dimensions $d \geq 4$, it would suffice to show from Proposition~\ref{REFormula} that $R_{S_j^{d-1}}\left(\frac{2d+2}{d+3} + \varepsilon \to 2\right) \lesssim 1$ for all non-zero $j$ and for some $\varepsilon > 0$. In other words, the following formula holds.
\begin{corollary}\label{epsilonIm} Let $d\ge 4$ be an even integer.  Suppose that the restriction estimate 
\begin{equation} \label{HopeR} R_{S_j^{d-1}}\left(\frac{2d+2}{d+3} + \varepsilon \to 2\right) \lesssim 1\end{equation}
holds for all non-zero $j$ and some $\varepsilon > 0$.  Then for any set $E\subset \mathbb F_q^d$ with $|E|\ge C q^{\frac{d+1}{2} - \frac{\varepsilon(d+3)^2}{8d + 6\varepsilon d + 18\varepsilon} },$  we have 
$$|\Delta(E)|\gtrsim q.$$
\end{corollary}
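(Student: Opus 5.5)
The plan is to apply Proposition~\ref{REFormula} directly with the exponent
$$p:=\frac{2d+2}{d+3}+\varepsilon=\frac{2d+2+\varepsilon(d+3)}{d+3}.$$
By hypothesis \eqref{HopeR}, $R_{S_j^{d-1}}(p\to 2)\lesssim 1$ holds uniformly for all $j\in\mathbb F_q^*$. Also $1\le p\le\infty$, since $p>\frac{2d+2}{d+3}\ge 1$ for $d\ge 1$. So the hypotheses of Proposition~\ref{REFormula} are satisfied. Its conclusion gives $|\Delta(E)|\gtrsim q$ whenever $|E|\ge q^{\frac{dp}{3p-2}}$. More precisely, it gives $|\Delta(E)|\gtrsim\min\{q,|E|^{3-2/p}q^{-(d-1)}\}$, and a large constant $C$ in front of the threshold only helps. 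What remains is to identify the exponent $\frac{dp}{3p-2}$ with the one in the statement.

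This is an algebraic computation. First,
$$3p-2=\frac{6d+6+3\varepsilon(d+3)-2d-6}{d+3}=\frac{4d+3\varepsilon(d+3)}{d+3},$$
so
$$\frac{dp}{3p-2}=\frac{d\,(2d+2+\varepsilon(d+3))}{4d+3\varepsilon(d+3)}.$$
Subtracting $\frac{d+1}{2}$ over the common denominator $2(4d+3\varepsilon(d+3))$, the numerator is
$$2d(2d+2)+2d\varepsilon(d+3)-4d(d+1)-3\varepsilon(d+1)(d+3)=\varepsilon(d+3)(2d-3d-3)=-\varepsilon(d+3)^2 .$$
Since $2(4d+3\varepsilon(d+3))=8d+6\varepsilon d+18\varepsilon$, we obtain
$$\frac{dp}{3p-2}=\frac{d+1}{2}-\frac{\varepsilon(d+3)^2}{8d+6\varepsilon d+18\varepsilon},$$
which is exactly the threshold in the corollary.

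Finally, if $|E|\ge Cq^{\frac{dp}{3p-2}}$, then $|E|^{3-2/p}q^{-(d-1)}\gtrsim q$. This holds because $\frac{dp}{3p-2}\cdot\frac{3p-2}{p}=d$, so $|E|^{3-2/p}\ge C^{3-2/p}q^{d}$. Hence $|\Delta(E)|\gtrsim q$. There is no real obstacle here; the only step requiring care is the bookkeeping in the exponent simplification.
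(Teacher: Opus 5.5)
Your proposal is correct and is exactly the argument the paper has in mind: substitute $p=\frac{2d+2}{d+3}+\varepsilon$ into Proposition~\ref{REFormula} and simplify $\frac{dp}{3p-2}$ to $\frac{d+1}{2}-\frac{\varepsilon(d+3)^2}{8d+6\varepsilon d+18\varepsilon}$. Your exponent simplification is right, and so is your treatment of the constant $C$ via $\frac{dp}{3p-2}\cdot\frac{3p-2}{p}=d$.
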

Proving the existence of $\varepsilon > 0$ satisfying the spherical restriction estimate \eqref{HopeR} is a very difficult problem, and currently no satisfactory method has been presented for this problem, whereas such existence is well known for the paraboloid in even dimensions $d\ge 4$ (for example, see \cite{MT04, IKL20, KPV}). The main reason for this is that the Fourier transform on a sphere is related to the Kloosterman sum, which is very difficult to handle, whereas the paraboloid is associated with the Gauss sum, which has an explicit closed form.

\subsection{Main problem and its motivation}
To provide some evidence that the finite field restriction conjecture for spheres is true, Kang and Koh \cite{KK14} considered the problem of $\ell^p \to L^r$ estimates for restriction operators on restricted test functions. In particular, in \cite{KK26} they showed that the conjectured $\ell^p\to L^2$ estimate holds when the test functions are homogeneous functions of degree zero. To prove this, they introduced  the $\mathcal{S}$-operator, defined below, and applied the boundedness of this operator along with restriction estimates for $j$-homogeneous varieties $H_j^d$ in $\mathbb F_q^{d+1},$ where we recall that for each non-zero $j\in \mathbb F_q^*,$
\[H_j^{d}=\{(x, x_{d+1})\in \mathbb F_q^d\times \mathbb F_q: \|x\|=jx_{d+1}^2\}.\]

\begin{definition} [The $\mathcal{S}$-operator]
Given a function $g: \mathbb F_q^d \to C,$  the $\mathcal{S}$-operator is defined by the relation
$$ \mathcal{S}g(m, m_{d+1}):= \frac{1}{q} \sum_{t\in \mathbb F_q^*} \chi(t m_{d+1}) g(tm),$$
where $m\in \mathbb F_q^d, m_{d+1}\in \mathbb F_q$  and $\chi$ denotes a nontrivial additive character of $\mathbb F_q.$
\end{definition} 

Beyond its applications to restriction theory (discussed below), the $\mathcal{S}$-operator is mathematically interesting in its own. Unlike standard operators in harmonic analysis, $\mathcal{S}$ maps functions on $\mathbb{F}_q^d$ to functions on $\mathbb{F}_q^{d+1}$.This dimension-changing operator exhibits sharp boundedness transitions requiring new analytical techniques, with dramatically different behavior for general versus radial functions. 

In this paper,  we study the $\ell^p\to \ell^s$ estimate for the $\mathcal{S}$-operator.
\begin{problem} [The $\mathcal{S}$-operator problem] For $1\le p, s\le \infty,$ let $\mathcal{S}(p\to s)$ denote the smallest number such that the estimate
$$ \|\mathcal{S}g\|_{\ell^s(\mathbb F_q^{d+1})} \le \mathcal{S}(p\to s)\|g\|_{\ell^p(\mathbb F_q^d)}$$
holds for all functions $g: \mathbb F_q^d \to \mathbb{C}.$ The $\mathcal{S}$-operator problem is to determine all exponents $1\le p, s\le \infty$  such that  $\mathcal{S}(p\to s) \le C$ for some constant $C$ independent of $q$.
\end{problem}

The primary motivation for studying the $\mathcal{S}$-operator problem in the paper \cite{KK14} was to investigate the existence of $\varepsilon$ in the spherical restriction estimate \eqref{HopeR}, which serves as a fundamental assumption in Corollary~\ref{epsilonIm} for even dimensions and enables us to break down the $(d+1)/2$ barrier in the Erd\H{o}s-Falconer distance problem in even dimensions $d \geq 4$. To explain the motivation more specifically, let us recall the following lemma.

\begin{lemma} [Lemma 3.4, \cite{KK26}] \label{Lem1.6} Let $1\le p,  r\le \infty$ and $C_1, C_2>0$. Suppose that  for all functions $g$ on $\mathbb F_q^d$, there exists  $1\le s\le \infty$ such that 
  $$\big\|\widehat{\mathcal{S}g}\big\|_{L^r(H_j^d)} ~\le C_1~\big\|\mathcal{S}g\big\|_{\ell^s(\mathbb F_q^{d+1})} \le C_2\|g\|_{\ell^p(\mathbb F_q^d)}.$$
Then we have
$$ R_{S_j^{d-1}}(p\to r)\lesssim 1.$$ 
\end{lemma}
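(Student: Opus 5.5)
The plan is to prove the lemma by establishing a pointwise identity that expresses $\widehat{\mathcal{S}g}$ on $H_j^d$ in terms of $\widehat{g}$ on $S_j^{d-1}$. Once we have it, comparing the two normalized $L^r$ norms gives $\|\widehat{g}\|_{L^r(S_j^{d-1})}\lesssim \|\widehat{\mathcal{S}g}\|_{L^r(H_j^d)}$. The hypothesis then yields $\|\widehat g\|_{L^r(S_j^{d-1})}\lesssim C_1\|\mathcal{S}g\|_{\ell^s}\le C_2\|g\|_{\ell^p}$, which is exactly $R_{S_j^{d-1}}(p\to r)\lesssim 1$. I will use the convention $\widehat{f}(x)=\sum_{m}f(m)\chi(-m\cdot x)$ with counting measure on the function side, as in Section \ref{secPre}. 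The argument is insensitive to the sign convention.

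\textbf{Step 1 (the identity).} For $(x,x_{d+1})\in\mathbb F_q^{d+1}$ we write
$$\widehat{\mathcal{S}g}(x,x_{d+1})=\frac1q\sum_{t\ne0}\sum_{m\in\mathbb F_q^d} g(tm)\chi(-m\cdot x)\sum_{m_{d+1}\in\mathbb F_q}\chi\big(m_{d+1}(t-x_{d+1})\big).$$
By orthogonality the inner sum equals $q\,\delta_{t=x_{d+1}}$. Hence $\widehat{\mathcal{S}g}(x,0)=0$. For $x_{d+1}\neq0$, the substitution $m'=x_{d+1}m$ gives $\widehat{\mathcal{S}g}(x,x_{d+1})=\widehat g\big(x/x_{d+1}\big)$. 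If $(x,x_{d+1})\in H_j^d$ with $x_{d+1}\ne0$, then $\|x/x_{d+1}\|=j$, so $x/x_{d+1}\in S_j^{d-1}$.

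\textbf{Step 2 (counting).} The set $H_j^d$ splits into two pieces. The first is $\{x_{d+1}=0\}$, which is a copy of $S_0^{d-1}$ and contributes nothing by Step 1. The second is $\{x_{d+1}\ne0\}$, on which the map $(x,x_{d+1})\mapsto x/x_{d+1}$ is exactly $(q-1)$-to-one onto $S_j^{d-1}$, since $x=x_{d+1}y$ with $y\in S_j^{d-1}$. Therefore, for $r<\infty$,
$$\|\widehat{\mathcal{S}g}\|_{L^r(H_j^d)}^r=\frac{(q-1)|S_j^{d-1}|}{(q-1)|S_j^{d-1}|+|S_0^{d-1}|}\,\|\widehat g\|^r_{L^r(S_j^{d-1})}.$$
For $r=\infty$ the two sup norms coincide directly.

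\textbf{Step 3 (the main point).} It remains to show that the ratio above is $\gtrsim 1$ uniformly in $q$; this is the only place where $j\neq0$ is used. I would invoke the standard sphere counts $|S_j^{d-1}|\sim q^{d-1}$ for $j\ne0$ (for $d=2$ this is $q\pm1$) and $|S_0^{d-1}|\lesssim q^{d-1}$. Both follow from the Gauss sum evaluation of $\sum_s\sum_x\chi(s(\|x\|-j))$. These give $(q-1)|S_j^{d-1}|\gtrsim |S_0^{d-1}|$, so the ratio is bounded below by an absolute constant, and the lemma follows as described in the first paragraph.
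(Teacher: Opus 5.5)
Your proof is correct and is essentially the argument the paper relies on: it does not reprove the lemma, but in Section~4 it quotes from the proof of Lemma~3.4 in \cite{KK26} the equivalence $\|\widehat{\mathcal{S}g}\|_{L^2(H_j^d)}\sim\|\widehat g\|_{L^2(S_j^{d-1})}$. Your Steps~1--3 (the identity $\widehat{\mathcal{S}g}(x,x_{d+1})=\widehat g(x/x_{d+1})$ for $x_{d+1}\neq0$ and $=0$ for $x_{d+1}=0$, the $(q-1)$-to-one parametrization of $H_j^d\cap\{x_{d+1}\neq0\}$ by $S_j^{d-1}$, and the sphere-size bounds) establish exactly that equivalence for every $r$, and the chain with the hypothesis holds for each $g$ whatever $s$ it uses.
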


From Lemma \ref{Lem1.6},  we see that if for each function $g:\mathbb F_q^d \to \mathbb C, $ there exists  $1\le s\le \infty$ such that  
\begin{equation}\label{HRes}\|\widehat{\mathcal{S}g}\|_{L^2(H_j^d)} ~\lesssim ~\|\mathcal{S}g\|_{\ell^{s} (\mathbb F_q^{d+1})} \end{equation}
and
\begin{equation}\label{SgProblem} \|\mathcal{S}g\|_{\ell^{s}(\mathbb F_q^{d+1})} \lesssim \|g\|_{\ell^{\frac{2d+4}{d+4} }(\mathbb F_q^d)},\end{equation}
then $R_{S_j^{d-1}}\left( \frac{2d+4}{d+4}\to 2  \right) \lesssim 1,$ which is the conjectured $L^2$ restriction result for the spheres in even dimensions $d\ge 4$ (see \cite{KPV}). The inequality appearing in \eqref{HRes} is the restriction problem related to $H_j^d$ for test functions with $F=\mathcal{S}g$ for the functions $g$ on $\mathbb{F}_q^d$, and we expect to address this problem in subsequent work. The inequality appearing in \eqref{SgProblem} is the main problem studied in this paper, namely the $\mathcal{S}$-operator problem.

\subsection{Statement of main results}
 An important and natural question is: does there exist $1 \le s \le \infty$ such that both inequalities \eqref{HRes} and \eqref{SgProblem} are satisfied for all functions $g: \mathbb{F}_q^d \to \mathbb{C}$? 
In this paper, we will show that no such $s$ independent of the function $g$ exists. To justify the nonexistence of such $s$, as our first main result, we establish a complete answer to the $S$-operator problem. As another main result of ours, we also prove a necessary and sufficient condition for the boundedness of the $\mathcal{S}$-operator in the case where the test function $g:\mathbb{F}_q^d \to \mathbb{C}$ is restricted to a radial function.

\subsubsection{Results on the $\mathcal{S}$-operator problem}
The test function $g$ is defined on $\mathbb{F}_q^d$, while $\mathcal{S}g$ is defined on $\mathbb{F}_q^{d+1}$. This makes the $\mathcal{S}$-operator distinct from operators commonly found in harmonic analysis. 
Our optimal result on the boundedness of the $\mathcal{S}$-operator is as follows, obtained by employing techniques adapted to the finite field setting. 

\begin{theorem} \label{MMain}
Let $1\le p, s\le \infty.$
Then $\mathcal{S}(p\to s)\lesssim 1$ if and only if the point $(1/p, 1/s)$ lies in the convex hull of the points $(0,0)$, $(1, 0)$, $(1, 1/2)$, and $(1/2, 1/2)$.
\end{theorem}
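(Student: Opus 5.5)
Sufficiency reduces to the two endpoints $(1/2,1/2)$ and $(1,1/2)$ plus the trivially implied region. Necessity comes from a few explicit test functions.

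\textbf{Sufficiency.} The convex hull has vertices $(0,0),(1,0),(1,1/2),(1/2,1/2)$. Since $\ell^p(\mathbb F_q^d)$ norms decrease in $p$ (counting measure), a bound at $(1/p_0,1/s)$ implies the bound at every $(1/p,1/s)$ with $1/p\ge 1/p_0$. Likewise, since $\|\cdot\|_{\ell^s}$ decreases in $s$, a bound at $(1/p,1/s_0)$ gives one at every $(1/p,1/s)$ with $1/s\le 1/s_0$. Hence it suffices to prove the single estimate $\mathcal{S}(2\to 2)\lesssim 1$; all other points of the quadrilateral, including $(0,0)$ and $(1,0)$, follow from it by these monotonicity moves.

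For $\mathcal{S}(2\to2)$ we use Plancherel in the $m_{d+1}$ variable. For fixed $m$, the function $m_{d+1}\mapsto \mathcal{S}g(m,m_{d+1})$ is $q^{-1}$ times the one-dimensional Fourier transform of $t\mapsto g(tm)1_{t\ne0}$. Therefore
\[
\sum_{m_{d+1}}|\mathcal{S}g(m,m_{d+1})|^2=\frac{1}{q}\sum_{t\ne0}|g(tm)|^2 .
\]
Summing over $m\in\mathbb F_q^d$, each nonzero $x$ is written as $tm$ in exactly $q-1$ ways, and $x=0$ arises only from $m=0$. This gives
\[
\|\mathcal{S}g\|_2^2=\frac{q-1}{q}\Bigl(\sum_{x\ne0}|g(x)|^2+|g(0)|^2\Bigr)\le \|g\|_2^2 .
\]
As a sanity check, the $\infty\to\infty$ bound via $|\mathcal{S}g|\le \frac{q-1}{q}\|g\|_\infty$ also holds and is consistent with this.

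\textbf{Necessity.} We must rule out $1/s>1/2$ and $1/s>1/p$; these two half-planes together with the box cut out exactly the quadrilateral.
\begin{itemize}
\item To rule out $1/s>1/2$, take $g=\delta_0$. Then $\mathcal{S}g(0,m_{d+1})=\frac1q\sum_{t\ne0}\chi(tm_{d+1})$, which equals $(q-1)/q$ for $m_{d+1}=0$ and $-1/q$ otherwise, while $\mathcal{S}g(m,\cdot)=0$ for $m\ne0$. This example only gives boundedness, so it is not enough.
\item Instead take $g=1_{\mathbb F_q^d}$. Then $\mathcal{S}g(m,m_{d+1})$ equals $(q-1)/q$ when $m_{d+1}=0$ and $-1/q$ otherwise. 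Hence $\|\mathcal{S}g\|_s\gtrsim q^{d/s}$ (the $m_{d+1}=0$ slice alone contributes this), while $\|g\|_p=q^{d/p}$. This forces $1/s\le 1/p$.
\item For the line $1/s\le1/2$, take $g=\delta_{x_0}$ with $x_0\ne0$. Then $\mathcal{S}g(m,m_{d+1})=\frac1q\chi(t m_{d+1})$ for the unique $t$ with $m=t^{-1}x_0$, so $\mathcal{S}g$ is supported on about $q\cdot q$ points with modulus $1/q$. This gives $\|\mathcal{S}g\|_s\sim q^{2/s-1}$ against $\|g\|_p=1$, which forces $2/s\le1$.
\end{itemize}
Combining the three constraints yields exactly the stated hull.

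The main obstacle is the $\ell^2$ identity. It needs care in counting the representations $x=tm$ and in handling $m=0$, where the contribution is $|g(0)|^2$ times $\frac1q\sum_{m_{d+1}}|\frac1q\sum_{t\ne0}\chi(tm_{d+1})|^2$. Recomputing this term directly gives $\frac{q-1}{q}|g(0)|^2\cdot\frac1q\cdot q$, which is fine. The rest is routine.
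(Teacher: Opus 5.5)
Your necessity argument is correct and matches the paper's: $g=\delta_{x_0}$ with $x_0\neq 0$ forces $1/s\le 1/2$, and $g=1_{\mathbb F_q^d}$ (the paper uses $1_V$ for any subspace $V$ of dimension $k\ge 1$; yours is the case $V=\mathbb F_q^d$) forces $1/s\le 1/p$. Your fiberwise Plancherel identity $\|\mathcal{S}g\|_{\ell^2}^2=\frac{q-1}{q}\|g\|_{\ell^2}^2$ is also correct. It already covers $m=0$, so no separate treatment is needed. Your closing paragraph inserts a spurious factor $\frac1q$ in the $m=0$ term and then cancels it, though the final value $\frac{q-1}{q}|g(0)|^2$ is right.

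The gap is your claim that $\mathcal{S}(2\to2)\lesssim 1$ alone gives the whole quadrilateral. Your two monotonicity moves only increase $1/p$ and decrease $1/s$. From $(1/2,1/2)$ they therefore reach only $\{1/p\ge 1/2,\ 1/s\le 1/2\}$. The part of the region with $1/p<1/2$, i.e.\ the triangle with vertices $(0,0)$, $(1/2,0)$, $(1/2,1/2)$ (for example $p=s=4$, or $(0,0)$ itself), is never reached.

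Concretely, the chain $\|\mathcal{S}g\|_{\ell^4}\le\|\mathcal{S}g\|_{\ell^2}\le\|g\|_{\ell^2}$ cannot give a bound by $\|g\|_{\ell^4}$. For $g=1_{\mathbb F_q^d}$ one has $\|g\|_{\ell^2}=q^{d/2}$ but $\|g\|_{\ell^4}=q^{d/4}$.

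The missing step is exactly what the paper does. The bound $\|\mathcal{S}g\|_{\ell^\infty}\le\|g\|_{\ell^\infty}$, which you mention only as a sanity check, is a genuinely needed second endpoint. Riesz--Thorin interpolation between it and your $\ell^2$ identity gives $\mathcal{S}(p\to p)\lesssim 1$ for all $2\le p\le\infty$, which is the entire diagonal edge from $(0,0)$ to $(1/2,1/2)$. Only after that do your monotonicity moves fill the full quadrilateral.
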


\begin{remark} We make the following observations regarding Theorem \ref{MMain}.
\begin{itemize}
\item The sharp result of Theorem \ref{MMain} is independent of the dimension $d$.
\item By virtue of the norm nesting property and the interpolation theorem (see Section \ref{secPre}), the proof of the sufficient condition for $\mathcal{S}(p\to s)\lesssim 1$ reduces to establishing the bounds $\mathcal{S}(\infty \to \infty)\lesssim 1$ and $\mathcal{S}(2\to 2)\lesssim 1$ (see Figure \ref{fig1}).
\item In \cite{KK26},  it was shown that if the test function $g$ belongs to a class of homogeneous functions of degree one, namely $g(tm)=g(m)$ for $t\ne 0,  m\in \mathbb F_q^d,$ then   the inequalities \eqref{HRes} and \eqref{SgProblem} hold with $s=(2d+4)/(d+4)$. However, Theorem \ref{MMain} shows that there exists a function $g$ for which \eqref{SgProblem} fails at $s=(2d+4)/(d+4)$.
\end{itemize}
\end{remark}

Invoking Theorem \ref{MMain},  we are able to prove the following:
\begin{corollary} \label{SadCor} There is no $1\le s\le \infty$ for which both inequalities \eqref{HRes} and \eqref{SgProblem} hold simultaneously for all functions $g: \mathbb{F}_q^d \to \mathbb{C}$
\end{corollary}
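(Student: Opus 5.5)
The plan is to play the two inequalities against each other: Theorem \ref{MMain} forces a lower bound on $s$, and then a single explicit test function shows that \eqref{HRes} fails for every such $s$.

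\emph{Step 1: consequence of \eqref{SgProblem}.} Put $p=\frac{2d+4}{d+4}$. Then $1/p=\frac{d+4}{2d+4}\in(1/2,1)$. By Theorem \ref{MMain}, $\mathcal{S}(p\to s)\lesssim 1$ holds exactly when $(1/p,1/s)$ lies in the convex hull of $(0,0),(1,0),(1,1/2),(1/2,1/2)$. For $1/p\in[1/2,1]$, the vertical slice of this quadrilateral is $0\le 1/s\le 1/2$. So \eqref{SgProblem} forces $s\ge 2$ (with $s=\infty$ allowed).

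\emph{Step 2: \eqref{HRes} fails for every $s\ge 2$.} Fix $j\neq0$ and pick $a\in\mathbb F_q^d$ with $\|a\|=j$; such $a$ exists since spheres of nonzero radius in $\mathbb F_q^d$, $d\ge2$, are nonempty. Take $g(m)=\chi(m\cdot a)$. Then
$$\mathcal{S}g(m,m_{d+1})=\frac1q\sum_{t\ne0}\chi\big(t(m\cdot a+m_{d+1})\big)=\delta_0(m\cdot a+m_{d+1})-\frac1q .$$

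\emph{The $\ell^s$ side.} The function $\mathcal{S}g$ has size about $1$ on the $q^d$ points of the hyperplane $m\cdot a+m_{d+1}=0$. Elsewhere it equals $-1/q$ on about $q^{d+1}$ points. Hence $\|\mathcal{S}g\|_{\ell^s(\mathbb F_q^{d+1})}\sim q^{d/s}\le q^{d/2}$.

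\emph{The $L^2(H_j^d)$ side.} Computing the Fourier transform with orthogonality of characters gives
$$\widehat{\mathcal{S}g}(x,x_{d+1})=\frac1q\sum_{t\neq0}q^d\,\delta_0(ta-x)\cdot q\,\delta_0(t-x_{d+1})=q^d\,\mathbf 1_{\{x=x_{d+1}a,\ x_{d+1}\ne0\}} .$$
Its support is the punctured line $\{(ta,t):t\ne0\}$. This line lies in $H_j^d$, because $\|ta\|=t^2\|a\|=jt^2$. Since $|H_j^d|\sim q^d$, the normalized norm is
$$\|\widehat{\mathcal{S}g}\|_{L^2(H_j^d)}\sim\Big(\frac{q\cdot q^{2d}}{q^d}\Big)^{1/2}=q^{\frac{d+1}{2}} .$$

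\emph{Conclusion.} Inequality \eqref{HRes} would require $q^{(d+1)/2}\lesssim q^{d/s}\le q^{d/2}$, which fails as $q\to\infty$. So for every $s\ge2$ there is a $g$ violating \eqref{HRes}. Combined with Step 1, no $s$ satisfies both inequalities for all $g$.

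The main point to check is the sign and normalization convention for $\widehat{F}$ on $\mathbb F_q^{d+1}$ and for $L^2(H_j^d)$ fixed in Section \ref{secPre}. The conclusion is robust, though. Any choice of sign just replaces $a$ by $-a$. The example wins by a factor $q^{1/2}$ even at the worst case $s=2$, so absolute constants cannot affect the outcome.
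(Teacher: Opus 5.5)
Your proof is correct and follows the same route as the paper: Theorem \ref{MMain} forces $s\ge 2$, and the test function $g(m)=\chi(a\cdot m)$ with $\|a\|=j$ gives $\|\widehat{\mathcal{S}g}\|_{L^2(H_j^d)}\sim q^{(d+1)/2}$ against $\|\mathcal{S}g\|_{\ell^s(\mathbb F_q^{d+1})}\lesssim q^{d/2}$. There are two minor differences. You compute $\widehat{\mathcal{S}g}$ on $H_j^d$ directly, whereas the paper imports the comparison $\|\widehat{\mathcal{S}g_0}\|_{L^2(H_j^d)}\sim\|\widehat{g_0}\|_{L^2(S_j^{d-1})}$ from the proof of Lemma 3.4 in \cite{KK26}, so your version is self-contained. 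You also treat all $s\ge 2$ at once via $q^{d/s}\le q^{d/2}$ instead of first reducing to $s=2$ by norm nesting.
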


\begin{remark}  We emphasize that Corollary \ref{SadCor} does not imply the failure of the approach linking the boundedness of the $\mathcal{S}$-operator to improved restriction results for spheres, as Lemma 1.6 allows for the possibility that s exists depending on the function $g$.
\end{remark}

\begin{figure}[h]
\centering
\begin{tikzpicture}[scale=6]
% Draw coordinate axes with arrows
\draw[->] (0,0) -- (1.2,0) node[right] {$\frac{1}{p}$};
\draw[->] (0,0) -- (0,1.2) node[above] {$\frac{1}{s}$};

% Mark points on x-axis
\node[below] at (0.5,-0.05) {$\frac{1}{2}$};
\node[below] at (1,-0.05) {$1$};

% Mark points on y-axis
\node[left] at (-0.05,0.5) {$\frac{1}{2}$};
\node[left] at (-0.05,0.74) {$\frac{d}{d+1}$};
\node[left] at (-0.05,1) {$1$};

% Draw tick marks
\draw (0.5,-0.02) -- (0.5,0.02);
\draw (1,-0.02) -- (1,0.02);
\draw (-0.02,0.5) -- (0.02,0.5);
\draw (-0.02,0.74) -- (0.02,0.74);
\draw (-0.02,1) -- (0.02,1);

% Draw unit square outline with dashed lines
\draw[gray, dashed, thin] (0,1) -- (1,1);
\draw[gray, dashed, thin] (1,0) -- (1,1);

% Define the convex hull vertices
\coordinate (A) at (0,0);
\coordinate (B) at (1,0);  
\coordinate (C) at (1,0.74);
\coordinate (D) at (0.5,0.5);
\coordinate (E) at (1,0.5);

% Fill the convex hull region (red shaded area)
\fill[red!25] (A) -- (B) -- (C) -- (D) -- cycle;

% Fill the triangle formed by (1/2,1/2), (1,1/2), (1,d/(d+1)) with blue
\fill[blue!25] (D) -- (E) -- (C) -- cycle;

%\fill[green!25] (D) -- (A) -- (F) -- cycle;

% Draw the boundary of the convex hull
\draw[thick, black] (A) -- (B) -- (C) -- (D) -- cycle;

% Draw the boundary of the blue triangle
\draw[thick, black] (D) -- (E) -- (C) -- cycle;

% Mark the key vertices with filled circles
\fill (A) circle (0.8pt);
\fill (B) circle (0.8pt);
\fill (C) circle (0.8pt);
\fill (D) circle (0.8pt);
\fill (E) circle (0.8pt);

% Add vertex labels
\node[below left] at (A) {$(0,0)\ $};
\node[below right] at (B) {$\ (1,0)$};
\node[above right] at (C) {$\ (1,\frac{d}{d+1})$};
\node[above left] at (D) {$(\frac{1}{2},\frac{1}{2})$};
\node[right] at (E) {$\ (1,\frac{1}{2})$};

% Add the inequality label inside the red shaded region
\node[red, font=\footnotesize] at (0.6,0.3) { $\mathcal{S}(p \to s) \lesssim 1$};

% Add "Theorem 1.4" label below the inequality (moved down)
\node[red, font=\footnotesize] at (0.6,0.2) {(Theorem \ref{MMain})};

% Add "Conjecture" label inside the blue triangle
\node[blue, font=\tiny] at (0.815,0.535) {$\mathcal{S}_{\mathcal{R}_q}(p \to s) \lesssim 1$};

\end{tikzpicture}
\caption{ Sharp boundedness regions for the $\mathcal{S}$-operator (red shaded) and 
$\mathcal{S}_{\mathcal{R}_q}$-operator on radial functions (red + blue shaded). Boundaries are 
optimal: boundedness fails outside these regions. The radial restriction allows 
extension from $(1, 1/2)$ to $(1, d/(d+1))$.}
\label{fig1}
\end{figure}
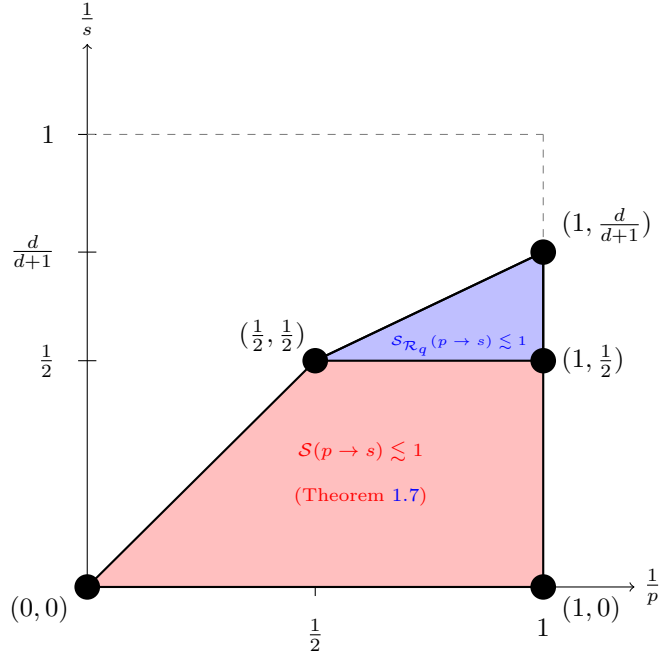

\subsubsection{Results on the restricted $\mathcal{S}$-operator problem}
Let $\mathcal{F}(\mathbb{F}_q^d \to \mathbb{C})$ denote the set of all functions from $\mathbb{F}_q^d$ to the complex numbers $\mathbb{C}$. For each finite field $\mathbb{F}_q$, we consider a subset $\mathcal{A}_q \subset \mathcal{F}(\mathbb{F}_q^d \to \mathbb{C})$. For $1\le p, s\le \infty,$ we define $\mathcal{S}_{\mathcal{A}_q}(p \to s)$ to be the smallest constant such that the inequality
$$
\|\mathcal{S}g\|_{\ell^s(\mathbb{F}_q^{d+1})} \le \mathcal{S}_{\mathcal{A}_q}(p \to s) \|g\|_{\ell^p(\mathbb{F}_q^d)}
$$
holds for all functions $g \in \mathcal{A}_q$. It is trivial that $\mathcal{S}_{\mathcal{A}_q}(p \to s) \le \mathcal{S}(p \to s)$. With this definition, we consider the following restricted $\mathcal{S}$-operator problem.
\begin{problem} [Restricted $\mathcal{S}$-operator problem to $\mathcal{A}_q$]   Find all numbers $1\le p, s\le \infty$ such that  $\mathcal{S}_{\mathcal{A}_q}(p \to s)\lesssim 1.$
\end{problem}

In \cite{KK26}, the authors showed that when $\mathcal{A}_q=\mathcal{H}_q$, which is the set of homogeneous functions of degree zero in $\mathbb{F}_q^d$, that is,
$
\mathcal{H}_q := \{g\in \mathcal{F}(\mathbb{F}_q^d\to \mathbb{C}): g(m)=g(\lambda m) \text{ for all } m\in \mathbb{F}_q^d, \lambda\in \mathbb{F}_q^*\},
$
we have 
\begin{equation} \label{ppestimate} \mathcal{S}_{\mathcal{A}_q}(p\to p) \lesssim 1 \quad  \mbox{for all}~~ 1\le p \le \infty. \end{equation}
 However, as seen in Theorem \ref{MMain}, there exists a class of test functions for which \eqref{ppestimate} fails. 

Now we are interested in proving sharp boundedness conditions for $\mathcal{S}_{\mathcal{A}_q} (p\to s)$  where  $\mathcal{A}_q  \subset \mathcal{F}(\mathbb{F}_q^d \to \mathbb{C}).$ As our result in this direction, we provide the answer in the case where $\mathcal{A}_q$ is the class of radial test functions. Recall that a function $g:\mathbb F_q^d \to \mathbb C$ is  radial  if  $g(m)=g(m')$ whenever $\|m\|=\|m'\|.$

In the case $(d, q)\ne (2,  3\pmod{4})$, we obtain the answer to the $\mathcal{S}$-operator problem restricted to radial test functions as follows (see Figure \ref{fig1}).
\begin{theorem} \label{mainRR}  Let $\mathcal{R}_q$ denote the class of all radial functions on $\mathbb F_q^d$  and let $1\le p, s\le \infty.$ Then the following two statements hold:
\begin{enumerate}
\item[(i)] If  $(1/p, 1/s)$ lies on the convex hull of the points $(0,0), (1, 0), (1, d/(d+1)),$ and $(1/2, 1/2),$ then $\mathcal{S}_{\mathcal{R}_q}\left(p \to s\right) \lesssim 1.$

\item[(ii)] In addition, if we assume that $(d, q)\ne (2, ~3\pmod{4}),$ then  the reverse statement of \text{(i)} is also true. 
\end{enumerate}
\end{theorem}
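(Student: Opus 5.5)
The plan is to treat the two parts separately: interpolation for (i) and explicit radial test functions for (ii).

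\textbf{Sufficiency.} Since $\mathcal{S}_{\mathcal{R}_q}(p\to s)\le \mathcal{S}(p\to s)$, Theorem \ref{MMain} already gives boundedness on the hull of $(0,0),(1,0),(1,1/2),(1/2,1/2)$. Interpolation and norm nesting ($\ell^{s_1}\subset\ell^{s_2}$ for $s_1\le s_2$) then reduce (i) to the single new endpoint $\mathcal{S}_{\mathcal{R}_q}(1\to \frac{d+1}{d})\lesssim 1$. Interpolating between different points is legitimate here because $\mathcal{R}_q$ is a linear subspace invariant under taking moduli. Alternatively one can use restricted weak-type and Riesz–Thorin on the space of radial functions, which is a finite-dimensional space with its own atoms.

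By linearity and the triangle inequality, it suffices to treat the atoms $g=1_{S_j}$, the indicator of the sphere $\{\|m\|=j\}$, with normalization $\|g\|_1=|S_j|$. Here $|S_j|\sim q^{d-1}$ for $j\neq0$, and $|S_0|$ is $1$ or of order $q^{d-1}$. Writing $g=\sum_j c_j 1_{S_j}$ with $\|g\|_1=\sum_j|c_j||S_j|$, it is enough to show
\[
\|\mathcal{S}1_{S_j}\|_{\ell^{(d+1)/d}}\lesssim |S_j|.
\]
To compute, note that $1_{S_j}(tm)=1$ exactly when $\|m\|=j/t^2$. Hence
\[
\mathcal{S}1_{S_j}(m,m_{d+1})=\frac1q\sum_{t\neq0:\ t^2\|m\|=j}\chi(tm_{d+1}).
\]
For $j\neq0$ this sum has at most two terms, and it is nonzero only when $j/\|m\|$ is a nonzero square. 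So $|\mathcal{S}1_{S_j}|\le 2/q$ on a set of size at most $\sim q^{d}\cdot q$. This gives
\[
\|\mathcal{S}1_{S_j}\|_{(d+1)/d}\lesssim q^{-1}\,(q^{d+1})^{d/(d+1)}=q^{d-1}\sim|S_j|,
\]
as desired.

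For $j=0$ the support of the sum is $\{\|m\|=0\}$, where the sum equals $\frac1q\sum_{t\ne0}\chi(tm_{d+1})$. This has size about $1$ when $m_{d+1}=0$ and $1/q$ otherwise. The $\ell^{(d+1)/d}$ norm is then at most about $|S_0|^{d/(d+1)}+q^{-1}(q|S_0|)^{d/(d+1)}\lesssim |S_0|$, using $|S_0|\ge1$. I must double-check the exceptional case $|S_0|=1$ ($d=2$, $q\equiv 3 \pmod 4$), but even there the point $m=0$ contributes $O(1)$, which is acceptable. This proves (i).

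\textbf{Necessity.} I will produce radial test functions showing that the three boundary lines are sharp.

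\emph{The line $1/s\le 1/p$, i.e.\ the edge from $(0,0)$ to $(1/2,1/2)$.} Take $g=1_{S_0}$ with $|S_0|\sim q^{d-1}$. Then $\mathcal{S}g\approx1$ on $S_0\times\{0\}$, so $\|\mathcal{S}g\|_s\gtrsim q^{(d-1)/s}$ while $\|g\|_p\sim q^{(d-1)/p}$. This forces $s\ge p$. This is exactly where the hypothesis $(d,q)\ne(2,3\bmod 4)$ enters: that case is the one in which the cone $S_0$ is trivial.

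\emph{The edge from $(1/2,1/2)$ to $(1,d/(d+1))$.} Take $g=1_{S_j}$ with $j\neq0$. From the computation above, $|\mathcal{S}g|\sim 1/q$ on about $q^{d+1}$ points; the main step is to check that cancellation of the two terms $\chi(tm_{d+1})+\chi(-tm_{d+1})$ is rare. This gives $q^{-1}q^{(d+1)/s}\lesssim q^{(d-1)/p}$, i.e.
\[
\frac{d+1}{s}-1\le\frac{d-1}{p},
\]
whose boundary line passes through $(1,d/(d+1))$ and $(1/2,1/2)$.

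\emph{The edge $1/p\le1$.} Finally, $p\ge1$ and $s\ge1$ are built into the hypotheses. The segment from $(1,0)$ to $(1,d/(d+1))$ is the boundary $1/p=1$, so no further example is needed there. The bottom edge $1/s\ge0$ is likewise automatic.

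\textbf{Main obstacle.} I expect the hardest points to be the lower bound for $\mathcal{S}1_{S_j}$, specifically controlling how often the pair of characters cancels (equivalently, showing $\cos(2\pi\,\mathrm{Tr}(tm_{d+1})/p)$ is bounded away from $0$ on a positive proportion of points), and the precise handling of $|S_0|$ across the parity cases of $d$.
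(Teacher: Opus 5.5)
Your sufficiency argument is correct, but it takes a different route from the paper's. The paper writes a nonnegative radial $g$ with $\|g\|_{\ell^1}=1$ as $g(m)=M_{\|m\|}$. It then applies H\"older to pass from the $\ell^{(d+1)/d}$ norm to an $\ell^2$ sum over $(k,m_{d+1})$, uses orthogonality in $m_{d+1}$, and finishes with $\sum_k M_k^2\le \max_k M_k\cdot\sum_k M_k\lesssim q^{-2d+2}$. That argument relies on $|S_k^{d-1}|\sim q^{d-1}$ for all $k$, so the case $d=2$, $q\equiv 3 \pmod 4$ has to be split off and $M_0$ handled by hand. You instead use that the $\ell^1$ norm of a radial function is additive over spheres while the target norm is subadditive. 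This reduces the estimate to the normalized atoms $1_{S_j^{d-1}}/|S_j^{d-1}|$, and for these a crude bound suffices: at most two terms of size $q^{-1}$ on a support of size at most $q^{d+1}$, or the explicit formula when $j=0$. Your version needs neither H\"older nor orthogonality and uses only $|S_0^{d-1}|\ge 1$, so the exceptional case is absorbed with no extra work. Your justification for interpolating inside $\mathcal{R}_q$ (closure under pointwise maps, or identifying radial functions with functions on $\mathbb F_q$ weighted by $|S_j^{d-1}|$) covers a point the paper leaves implicit.

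For necessity you use the same test functions as the paper, $1_{S_0^{d-1}}$ and $1_{S_j^{d-1}}$ with $j\neq 0$. However, the step you list as the main obstacle is left unproved, and without it the edge from $(1/2,1/2)$ to $(1,d/(d+1))$ is not established. Note that $|\mathcal{S}g|\gtrsim 1/q$ holds only on a positive proportion of points, not pointwise. The missing step is exactly the paper's Lemma \ref{OSC}, and it does not require analyzing $\cos(2\pi\,\mathrm{Tr}(\cdot)/p)$. For $m$ with $j/\|m\|=t_0^2$, the substitution $u=t_0m_{d+1}$ gives $\sum_{m_{d+1}}|\mathcal{S}1_{S_j^{d-1}}(m,m_{d+1})|^s=q^{-s}\sum_{u\in\mathbb F_q}|\chi(u)+\chi(-u)|^s$. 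Orthogonality gives $\sum_u|\chi(u)+\chi(-u)|^2=2q$ for odd $q$. Since $|\chi(u)+\chi(-u)|\le 2$, at least $q/3$ values of $u$ satisfy $|\chi(u)+\chi(-u)|\ge 1$, hence $\sum_u|\chi(u)+\chi(-u)|^s\ge q/3$ for every $s$. The paper obtains the same bound from the binomial expansion $\sum_u(\chi(u)+\chi(-u))^n=\binom{n}{n/2}q$ for even $n$. Combined with $|\{m:\eta(j\|m\|)=1\}|\sim q^d$, this gives $\|\mathcal{S}1_{S_j^{d-1}}\|_{\ell^s}\gtrsim q^{(d+1)/s-1}$, and hence your condition $\frac{d+1}{s}-1\le\frac{d-1}{p}$. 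The rest of your necessity argument agrees with the paper, including the $j=0$ test giving $p\le s$ and the role of $(d,q)\neq(2,3 \bmod 4)$.
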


\subsection{Structure of the paper}
The remainder of this paper is organized as follows.
In Section~\ref{secPre}, we collect preliminary facts on the finite field Fourier transform, character orthogonality, sphere sizes, and interpolation theorems used throughout. Section~\ref{Sec3} provides the proof of Theorem~\ref{MMain}, establishing necessary and sufficient conditions for $S(p \to s) \lesssim 1$; the sufficient conditions are obtained by interpolating between the endpoint estimates $S(\infty \to \infty) \lesssim 1$ and $S(2 \to 2) \lesssim 1$, while the necessary conditions follow from testing against extremal functions. In Section \ref{Sec4}, we prove Corollary \ref{SadCor}, showing by contradiction that no uniform exponent $s$, independent of $g$, can satisfy both inequalities \eqref{HRes} and \eqref{SgProblem}. Finally, Section \ref{Sec5} is devoted to the proof of Theorem~1.12 on the restricted $\mathcal{S}$-operator problem for radial functions, where we establish the sharp exponent region via endpoint estimates and test functions of the form $g = \mathbf{1}_{S_j^{d-1}}$.

\section{Preliminaries} \label{secPre}
In this section, for the convenience of the reader, we summarize well-known facts that will be used to prove the main results of this paper. 

Let $\mathbb F_q$  be a finite field with $q=p^\alpha$, where $p$ is prime. Throughout this paper, we will use $\chi: \mathbb F_q \to \mathbb C^*$ to denote the principal additive character which is defined by
\[ \chi(s) = e^{2\pi i \text{Tr}(s)/p}, \]
where $\text{Tr}: \mathbb{F}_q \to \mathbb{F}_p$  is the trace function from $\mathbb F_q \to \mathbb{F}_p.$ We stress that the results of this paper hold for any choice of nontrivial additive character of $\mathbb F_q$.
%%%%%%%%%%%%%%%%%%%%%%%%%%%%%%%%%%%%%%%%%%%%%%%%%%%%%%%%%%%%%%%%%%%%%%%%%%%%%%%%%%%%%%%%

The most useful property of $\chi$ is the following orthogonality of non-trivial additive characters:
\begin{equation*}
\sum_{s \in \mathbb{F}_q} \chi(as) =
\begin{cases}
q & \text{if } a = 0, \\
0 & \text{if } a \neq 0.
\end{cases}
\end{equation*}
The Fourier transform of $f: \mathbb{F}_q^n \to \mathbb{C}$, denoted by $\widehat{f}$, is defined by
\begin{equation*}
\widehat{f}(x) = \sum_{m \in \mathbb{F}_q^n} f(m) \chi(-x\cdot m)
\end{equation*}
for $x \in \mathbb{F}_q^n$, where $x\cdot m= x_1m_1 + \cdots + x_n m_n$ denotes the standard inner product on $\mathbb{F}_q^n$.

The number of elements in  the sphere $S_j^{d-1}$ of radius $j$ in $\mathbb F_q^d$, denoted by $|S_j^{d-1}|,$  is given by
\begin{equation}\label{sizeS}\big|S_j^{d-1}\big| =\begin{cases} 
1 & \text{if }  j = 0,~ d=2,~ q\equiv 3\pmod{4}\\
\sim q^{d-1} & \text{otherwise.} 
\end{cases}\end{equation}

The norm nesting property states that  if $1\le p_0 \le p_1\le \infty,$ then 
\begin{equation}\label{NNP}\|f\|_{\ell^{p_1} (\mathbb F_q^n)} \le \|f\|_{\ell^{p_0} (\mathbb F_q^n)}.\end{equation}
Observe that  this norm nesting property implies that 
\begin{equation*}
 \mathcal{S}_{\mathcal{A}_q}(p \to s)  \le  \mathcal{S}_{\mathcal{A}_q}(p_1 \to s)   \quad \mbox{for}~~ 1\le p\le p_1\le\infty,
\end{equation*}
and
\begin{equation*}
 \mathcal{S}_{\mathcal{A}_q}(p\to s)  \le  \mathcal{S}_{\mathcal{A}_q}(p \to s_0)   \quad \mbox{for}~~ 1\le s_0\le s\le\infty.
\end{equation*}

From these observations, we can reduce the $\mathcal{S}$-operator problem to finding the largest possible value of $p$ for each fixed $1\le s\le \infty$ and the smallest possible value of $s$ for each fixed $1\le p\le \infty$ that satisfy $\mathcal{S}_{\mathcal{A}_q}(p\to s)\lesssim 1$. In other words, our problem becomes a matter of proving the critical endpoint estimates.  

In the finite field setting, the Riesz-Thorin interpolation theorem  can be stated as follows:
\begin{theorem}[Riesz-Thorin Interpolation Theorem over finite fields] \label{Intp}
Let $T: \ell^{p_0}(\mathbb{F}_q^d) \to L^{q_0}(\mathbb{F}_q^{d+1})$ and $T: \ell^{p_1}(\mathbb{F}_q^d) \to \ell^{q_1}(\mathbb{F}_q^{d+1})$ be bounded linear operators with norms $M_0$ and $M_1$ respectively, where $1 \leq p_0, p_1, q_0, q_1 \leq \infty$. Then for any $0 < \theta < 1$, the operator $T: \ell^{p_\theta}(\mathbb{F}_q^d) \to \ell^{q_\theta}(\mathbb{F}_q^{d+1})$ is bounded with norm $M_\theta$ satisfying
\begin{equation}
M_\theta \leq M_0^{1-\theta} M_1^\theta,
\end{equation}
where
\begin{equation}
\frac{1}{p_\theta} = \frac{1-\theta}{p_0} + \frac{\theta}{p_1} \quad \text{and} \quad \frac{1}{q_\theta} = \frac{1-\theta}{q_0} + \frac{\theta}{q_1}.
\end{equation}
\end{theorem}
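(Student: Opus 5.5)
The plan is to follow the classical complex-interpolation proof of Riesz--Thorin via Hadamard's three lines lemma. Because $\mathbb{F}_q^d$ and $\mathbb{F}_q^{d+1}$ are finite sets, every function is simple and every sum is finite, so there are no density or measurability issues. I read the space $L^{q_0}(\mathbb{F}_q^{d+1})$ in the hypothesis as $\ell^{q_0}(\mathbb{F}_q^{d+1})$; the argument is the same for any fixed weighted counting measure. The starting point is duality. For $1\le q_\theta\le\infty$ with conjugate exponent $q_\theta'$,
$$\|Tf\|_{\ell^{q_\theta}}=\sup\Big\{\Big|\sum_{x\in\mathbb{F}_q^{d+1}} Tf(x)\,g(x)\Big| : \|g\|_{\ell^{q_\theta'}}=1\Big\},$$
which is elementary on a finite set. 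So it suffices to show $\big|\sum_x Tf(x)g(x)\big|\le M_0^{1-\theta}M_1^{\theta}$ whenever $\|f\|_{\ell^{p_\theta}}=\|g\|_{\ell^{q_\theta'}}=1$.

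\textbf{Main case: $p_\theta<\infty$ and $q_\theta>1$.} Write $f=|f|\,e^{i\arg f}$ and $g=|g|\,e^{i\arg g}$. For $z$ in the closed strip $0\le \operatorname{Re} z\le 1$, define
$$f_z=|f|^{p_\theta\left(\frac{1-z}{p_0}+\frac{z}{p_1}\right)}e^{i\arg f},\qquad g_z=|g|^{q_\theta'\left(\frac{1-z}{q_0'}+\frac{z}{q_1'}\right)}e^{i\arg g},$$
using the convention $|a|^{w}=0$ when $a=0$, and set
$$F(z)=\sum_x (Tf_z)(x)\,g_z(x).$$
By linearity of $T$, $F$ is a finite linear combination of functions $e^{cz}$ with real $c$. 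Hence $F$ is entire and bounded on the strip. At $z=\theta$ we have $f_\theta=f$ and $g_\theta=g$, so $F(\theta)$ is exactly the pairing we need to bound.

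On the line $\operatorname{Re} z=0$ we have $|f_{it}|=|f|^{p_\theta/p_0}$, so $\|f_{it}\|_{p_0}^{p_0}=\|f\|_{p_\theta}^{p_\theta}=1$, and likewise $\|g_{it}\|_{q_0'}=1$. Hölder's inequality together with the $\ell^{p_0}\to\ell^{q_0}$ bound then gives $|F(it)|\le M_0$. The same computation on $\operatorname{Re} z=1$ gives $|F(1+it)|\le M_1$. The three lines lemma now yields $|F(\theta)|\le M_0^{1-\theta}M_1^{\theta}$, which is the claim.

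\textbf{Degenerate cases.} These need separate but trivial treatment. If $p_\theta=\infty$, then $p_0=p_1=\infty$; in that case I take $f_z=f$ and vary only $g_z$. Symmetrically, if $q_\theta=1$, then $q_0=q_1=1$, and I take $g_z=g$ and vary only $f_z$. If some exponent $1/p_j$ or $1/q_j'$ is $0$, the corresponding factor is simply constant in that variable. These bookkeeping cases, together with checking the boundary norm identities when $f$ or $g$ has zeros, are the only points needing care; there is no genuine obstacle. The key step is the boundedness of $F$ on the strip, which is immediate here because all sums are finite.
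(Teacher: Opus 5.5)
Your proposal is correct and is essentially the paper's approach. The paper gives no separate argument: it only notes that finite sets with counting measure are measure spaces, so the classical Riesz--Thorin theorem applies. You have written out that classical three-lines proof, with the degenerate endpoint cases handled correctly, and your reading of $L^{q_0}$ as $\ell^{q_0}$ is the right one, since with the normalized counting measure on only one endpoint the stated bound $M_\theta\le M_0^{1-\theta}M_1^\theta$ would in general be off by a power of $q$.
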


Notice that  the Riesz-Thorin interpolation theorem on finite fields is essentially the same as the classical version, since $\mathbb{F}_q^n$ is a finite measure space with the counting measure (or normalized counting measure).

\section{ Sharp result for $\mathcal{S}(p\to s)\lesssim 1$ (proof of Theorem \ref{MMain})} \label{Sec3}
In this section, we present the proof of Theorem \ref{MMain}, which provides a necessary and sufficient condition for $\mathcal{S}(p\to s)\lesssim 1$.
To complete the proof,  we  prove  two theorems below  in the following subsections.

\begin{theorem} [Sufficient conditions]\label{T1} Let $1\le p, s\le \infty.$
If  the point $(1/p, 1/s)$ lies in the convex hull of the points $(0,0)$, $(1, 0)$, $(1, 1/2)$, and $(1/2, 1/2)$, then $ \mathcal{S}(p\to s)\lesssim 1. $
\end{theorem}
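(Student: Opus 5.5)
By the norm nesting property \eqref{NNP} and Theorem \ref{Intp}, it suffices to prove the two endpoint bounds $\mathcal{S}(\infty\to\infty)\lesssim 1$ and $\mathcal{S}(2\to 2)\lesssim 1$.

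\textbf{Reduction.} Interpolating between these two endpoints gives the segment from $(0,0)$ to $(1/2,1/2)$. Norm nesting then gives the rest of the region. Since $\mathcal{S}(p\to s)\le \mathcal{S}(p_1\to s)$ for $p\le p_1$, we may move horizontally to the right from any point of that segment. Since $\mathcal{S}(p\to s)\le\mathcal{S}(p\to s_0)$ for $s_0\le s$, we may move vertically downward. So from $(0,0)$ we reach every point $(1/p,0)$, and from $(1/2,1/2)$ we reach every point $(1/p,1/2)$ with $1/p\ge 1/2$. Every point of the quadrilateral with vertices $(0,0)$, $(1,0)$, $(1,1/2)$, $(1/2,1/2)$ lies to the right of the diagonal segment and below height $1/2$. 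Hence it is reached from a point $(t,t)$ of the segment by moving right and then down.

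\textbf{The $\infty\to\infty$ bound.} This is immediate from the triangle inequality:
$$|\mathcal{S}g(m,m_{d+1})|\le \frac1q\sum_{t\ne0}|g(tm)|\le \frac{q-1}{q}\|g\|_\infty\le\|g\|_\infty .$$

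\textbf{The $2\to 2$ bound.} This is the main step. Fix $m$ and view $\mathcal{S}g(m,\cdot)$ as a function of $m_{d+1}$. It equals $q^{-1}$ times the one-dimensional Fourier transform, evaluated at $-m_{d+1}$, of the function $h_m(t)=g(tm)\mathbf 1_{t\ne0}$. By Plancherel on $\mathbb F_q$,
$$\sum_{m_{d+1}}|\mathcal{S}g(m,m_{d+1})|^2=\frac{1}{q}\sum_{t\ne0}|g(tm)|^2 .$$
Summing over $m$ and substituting $m\mapsto t^{-1}m$ for each fixed $t\neq 0$ gives
$$\|\mathcal{S}g\|_2^2=\frac1q\sum_{t\ne0}\sum_{m}|g(tm)|^2=\frac{q-1}{q}\|g\|_2^2\le\|g\|_2^2 .$$

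The only real content is this Plancherel computation and the change of variables $m\mapsto t^{-1}m$, which is a bijection of $\mathbb F_q^d$ for each $t\ne0$. No obstacle beyond careful bookkeeping of the $1/q$ normalization is expected. Both endpoint constants are at most $1$, so all resulting bounds hold with constant $1$, uniformly in $q$.
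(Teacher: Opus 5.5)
Your proposal is correct and matches the paper's proof. The paper likewise reduces, via norm nesting and Riesz--Thorin, to the $\infty\to\infty$ bound (triangle inequality) and the $2\to 2$ bound, and your one-dimensional Plancherel step in $m_{d+1}$ is the same as the paper's expansion of the square followed by character orthogonality, with the same dilation change of variables yielding $\|\mathcal{S}g\|_{\ell^2}^2=\frac{q-1}{q}\|g\|_{\ell^2}^2$.
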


\begin{theorem} [Necessary conditions] \label{T2} Let $1\le p, s\le \infty.$
If $\mathcal{S}(p\to s)\lesssim 1$, then  the point $(1/p, 1/s)$ lies in the convex hull of the points $(0,0)$, $(1, 0)$, $(1, 1/2)$, and $(1/2, 1/2)$.
\end{theorem}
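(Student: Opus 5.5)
The region in question is cut out by two inequalities. Inside the unit square, the convex hull of $(0,0)$, $(1,0)$, $(1,1/2)$, $(1/2,1/2)$ is exactly the set of points with
$$\frac1s\le \frac1p \qquad\text{and}\qquad \frac1s\le \frac12 .$$
The plan is to test $\mathcal{S}(p\to s)\lesssim 1$ against two explicit functions, one for each inequality. In each case I compute $\mathcal{S}g$ exactly using the orthogonality relation $\sum_{t\in\mathbb F_q}\chi(at)=q\,\delta_{a,0}$, and let $q\to\infty$.

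\textbf{Step 1 (the constraint $1/s\le 1/p$).} Take $g\equiv 1$ on $\mathbb F_q^d$, so that $\|g\|_{\ell^p}=q^{d/p}$. Since $g(tm)=1$ for every $t$ and $m$, we get
$$\mathcal{S}g(m,m_{d+1})=\frac1q\sum_{t\ne0}\chi(tm_{d+1})=\frac1q\bigl(q\,\delta_{m_{d+1},0}-1\bigr).$$
At the $q^d$ points with $m_{d+1}=0$ this equals $(q-1)/q\ge 1/2$. Hence $\|\mathcal{S}g\|_{\ell^s}\gtrsim q^{d/s}$, and this also holds for $s=\infty$. Boundedness would then force $q^{d/s}\lesssim q^{d/p}$ for all $q$, which gives $1/s\le 1/p$.

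\textbf{Step 2 (the constraint $1/s\le 1/2$).} Take $g=\delta_{m_0}$ for a fixed $m_0\ne 0$, so that $\|g\|_{\ell^p}=1$ for every $p$. For $m\ne 0$, the condition $g(tm)\neq 0$ holds for at most one $t\in\mathbb F_q^*$, namely when $m=t^{-1}m_0$. So for each of the $q-1$ points $m=t_0^{-1}m_0$ on the punctured line through $m_0$, and for every $m_{d+1}\in\mathbb F_q$, we get
$$\mathcal{S}g(m,m_{d+1})=\tfrac1q\chi(t_0m_{d+1}),$$
which has modulus $1/q$. This gives $(q-1)q\sim q^2$ points where $|\mathcal{S}g|=1/q$, so
$$\|\mathcal{S}g\|_{\ell^s}\gtrsim q^{-1}q^{2/s}=q^{2/s-1}.$$
Boundedness then requires $2/s-1\le 0$, that is $s\ge 2$.

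\textbf{Conclusion.} Combining the two steps with the trivial constraint $1/p\le 1$ places $(1/p,1/s)$ in the stated convex hull. Neither step is delicate. The only point needing care is noticing that the dimension-raising structure is what makes point masses bad: a single point spreads into roughly $q^2$ values of size $1/q$. Checking that the two half-planes intersected with the square give precisely that quadrilateral is routine.
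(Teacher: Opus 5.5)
Your proof is correct and is essentially the paper's argument: the paper also tests with a point mass $\delta_{x_0}$, $x_0\ne\mathbf{0}$, to force $1/s\le 1/2$, and with the indicator $1_V$ of a subspace $V$ to force $1/s\le 1/p$. Your choice $g\equiv 1$ is the special case $V=\mathbb F_q^d$ of that subspace test, and your one-sided lower bounds suffice where the paper computes $\|\mathcal{S}1_V\|_{\ell^s}\sim|V|^{1/s}$ exactly.
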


\subsection{Proof of Theorem \ref{T1}} We prove the results for the sufficient conditions for $\mathcal{S}(p\to s)\lesssim 1.$ Using the nesting properties of the norm  in \eqref{NNP}, it suffices to prove the following endpoint estimates.
\begin{theorem}\label{mainlem02}  For $2\le p\le \infty,$   the inequality
$$ \|\mathcal{S}_g\|_{\ell^p(\mathbb F_q^{d+1})} \lesssim \|g\|_{\ell^p(\mathbb F_q^d)}$$
holds for all functions $g$ on $\mathbb F_q^d.$
\end{theorem}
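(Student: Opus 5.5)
By the Riesz--Thorin interpolation theorem (Theorem \ref{Intp}), it suffices to prove the two endpoint estimates $\mathcal{S}(\infty\to\infty)\lesssim 1$ and $\mathcal{S}(2\to 2)\lesssim 1$. Interpolating between them with $1/p_\theta=(1-\theta)/2$ gives $\|\mathcal{S}g\|_{\ell^p(\mathbb F_q^{d+1})}\lesssim \|g\|_{\ell^p(\mathbb F_q^d)}$ for every $2\le p\le\infty$, with constant at most $M_0^{1-\theta}M_1^{\theta}$, which is independent of $q$.

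\emph{The $\ell^\infty$ bound.} This is immediate from the triangle inequality. For every $(m,m_{d+1})$ we have
$$|\mathcal{S}g(m,m_{d+1})|\le \frac1q\sum_{t\in\mathbb F_q^*}|g(tm)|\le \frac{q-1}{q}\|g\|_{\ell^\infty(\mathbb F_q^d)}.$$
Hence $\mathcal{S}(\infty\to\infty)\le 1$.

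\emph{The $\ell^2$ bound.} Fix $m\in\mathbb F_q^d$. As a function of $m_{d+1}$, $\mathcal{S}g(m,\cdot)$ is $q^{-1}$ times the one-dimensional Fourier transform of the function $t\mapsto g(tm)\mathbf 1_{t\neq 0}$. Applying Plancherel in the variable $m_{d+1}$ (orthogonality of $\chi$) gives
$$\sum_{m_{d+1}\in\mathbb F_q}|\mathcal{S}g(m,m_{d+1})|^2=\frac{1}{q^2}\cdot q\sum_{t\in\mathbb F_q^*}|g(tm)|^2=\frac1q\sum_{t\ne0}|g(tm)|^2.$$
Summing over $m$ yields
$$\|\mathcal{S}g\|_{\ell^2}^2=\frac1q\sum_{t\ne 0}\sum_{m\in\mathbb F_q^d}|g(tm)|^2.$$
For each fixed $t\neq0$, the map $m\mapsto tm$ is a bijection of $\mathbb F_q^d$, so the inner sum equals $\|g\|_{\ell^2}^2$. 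Therefore
$$\|\mathcal{S}g\|_{\ell^2}^2=\frac{q-1}{q}\|g\|_{\ell^2}^2\le\|g\|_{\ell^2}^2,$$
so $\mathcal{S}(2\to 2)\le 1$.

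The only step needing any care is the $\ell^2$ computation, specifically the change of variables $m\mapsto tm$, which is valid because $t\neq 0$. Once the two endpoints are in hand, interpolation completes the proof of Theorem \ref{mainlem02}. Theorem \ref{T1} then follows by combining these diagonal estimates with the norm nesting property \eqref{NNP} and interpolation, covering the convex hull of $(0,0),(1,0),(1,1/2),(1/2,1/2)$.
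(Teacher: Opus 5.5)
Your proposal is correct and follows essentially the same route as the paper. The paper proves the $\ell^\infty$ bound by the triangle inequality and the exact identity $\|\mathcal{S}g\|_{\ell^2}^2=\frac{q-1}{q}\|g\|_{\ell^2}^2$ by orthogonality in $m_{d+1}$ (your Plancherel step) followed by the substitution $m\mapsto sm$, and then interpolates between the two endpoints.
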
 
\begin{proof}  Recall that the $\mathcal{S}$-operator is defined by
\[
\mathcal{S}g(m, m_{d+1}) = \frac{1}{q} \sum_{t \in \mathbb{F}_q^*} \chi(t m_{d+1}) g(tm)
\]
for $m \in \mathbb{F}_q^d$ and $m_{d+1} \in \mathbb{F}_q$. 

First, it is obvious that
\begin{equation}\label{pinfty2} \|\mathcal{S}g\|_{\ell^\infty(\mathbb F_q^{d+1})} \lesssim \|g\|_{\ell^\infty(\mathbb F_q^d)},\end{equation}
since, for all functions $g$ on $\mathbb F_q^d,$ we have 
$$\|\mathcal{S}g\|_{\ell^\infty(\mathbb F_q^{d+1})}:=\max\limits_{M\in \mathbb F_q^{d+1}} |\mathcal{S}g(M)| \le  \max\limits_{m\in \mathbb F_q^d} |g(m)|=\|g\|_{\ell^\infty(\mathbb F_q^d)}.$$ 

%%%%%%%%%%%%%%%%%%%%%%%%%%%%%%%%%%%%%%%%%%%%%%%%%%%%%%%%%%%%%%%%%%%%%%%%%%%%%%%%%%%%%%%%%%%%%%%%%%
Now we prove the theorem in the case where $p=2$. 
%We prove that $\|\mathcal{S}g\|_{\ell^2(\mathbb{F}_q^{d+1})} \lesssim \|g\|_{\ell^2(\mathbb{F}_q^d)}$ for all functions $g : \mathbb{F}_q^d \to \mathbb{C}$. 
We compute
\begin{align*}
\|\mathcal{S}g\|_{\ell^2(\mathbb{F}_q^{d+1})}^2 
&= \sum_{m \in \mathbb{F}_q^d} \sum_{m_{d+1} \in \mathbb{F}_q} |\mathcal{S}g(m, m_{d+1})|^2 \\
&= \sum_{m \in \mathbb{F}_q^d} \sum_{m_{d+1} \in \mathbb{F}_q} \frac{1}{q^2} \left| \sum_{t \in \mathbb{F}_q^*} \chi(t m_{d+1}) g(tm) \right|^2.
\end{align*}
For each fixed $(m, m_{d+1}) \in \mathbb{F}_q^d \times \mathbb{F}_q$, we expand the square:
\begin{align*}
\left| \sum_{t \in \mathbb{F}_q^*} \chi(t m_{d+1}) g(tm) \right|^2
&= \sum_{ s, t \in \mathbb{F}_q^* } \chi(m_{d+1}(s - t)) g(sm) \overline{g(tm)}.
\end{align*}
Substituting this back, we obtain
\[
\|\mathcal{S}g\|_{\ell^2(\mathbb{F}_q^{d+1})}^2 
= \frac{1}{q^2} \sum_{m \in \mathbb{F}_q^d} \sum_{m_{d+1} \in \mathbb{F}_q} \sum_{ s, t \in \mathbb{F}_q^* } \chi(m_{d+1}(s - t)) g(sm) \overline{g(tm)}.
\]
By the orthogonality relation of the character $\chi$, we have
\[
\sum_{m_{d+1} \in \mathbb{F}_q} \chi(m_{d+1}(s - t)) = \begin{cases}
q & \text{if } s = t, \\
0 & \text{if } s \neq t.
\end{cases}
\]
Therefore, only terms with $s = t$ survive:
\begin{align*}
\|\mathcal{S}g\|_{\ell^2(\mathbb{F}_q^{d+1})}^2 
&= \frac{1}{q} \sum_{m \in \mathbb{F}_q^d} \sum_{ s \in \mathbb{F}_q^* }  |g(sm)|^2 .
\end{align*}
By Fubini's theorem, we can change the order of summation:
\begin{align*}
\|\mathcal{S}g\|_{\ell^2(\mathbb{F}_q^{d+1})}^2 
&= \frac{1}{q} \sum_{ s \in \mathbb{F}_q^* } \sum_{m \in \mathbb{F}_q^d}  |g(sm)|^2.
\end{align*}
For a fixed $s\in \mathbb F_q^*,$  we substitutes $\alpha=sm$.  
Hence,
\[
\|\mathcal{S}g\|_{\ell^2(\mathbb{F}_q^{d+1})}^2 
= \frac{1}{q} \sum_{ s \in \mathbb{F}_q^* } \sum_{\alpha \in \mathbb{F}_q^d} |g(\alpha)|^2 =\frac{q-1}{q} \|g\|_{\ell^2(\mathbb F_q^d)}^2.
\]
Taking square roots on both sides, we conclude that
\begin{equation} \label{ptwo2}
\|\mathcal{S}g\|_{\ell^2(\mathbb{F}_q^{d+1})} \lesssim \|g\|_{\ell^2(\mathbb{F}_q^d)}.
\end{equation}
Interpolating the estimates,  \eqref{ptwo2} and  \eqref{pinfty2}, we complete the proof of Theorem \ref{mainlem02}.
\end{proof}

\subsection{Proof of Theorem \ref{T2}}  Assume that for $1\le p, s\le \infty,$ 
$$ L:=\|\mathcal{S}g\|_{\ell^s(\mathbb F_q^{d+1})} \lesssim \|g\|_{\ell^p(\mathbb F_q^d)}=:R,$$
which holds for all function $g:\mathbb F_q^d \to \mathbb C.$
In other words,  for all functions $g: \mathbb F_q^d \to \mathbb C,$ it satisfies that
\begin{equation}\label{AssE} L= \left( \sum_{m\in \mathbb F_q^d, m_{d+1}\in \mathbb F_q} |\mathcal{S}g(m, m_{d+1})|^s\right)^{\frac{1}{s}} \lesssim  \left(\sum_{m\in \mathbb F_q^d} |g(m)|^p\right)^{\frac{1}{p}}=R.\end{equation}

\noindent \textbf{CASE 1}. Let us test the inequality \eqref{AssE} with $g=\delta_{x_0},$
where $x_0 \in \mathbb F_q^d \setminus \{\mathbf{0}\}$ and  $\delta_{x_0}(x)=1$ for $x=x_0$ and  0  otherwise. 

It is clear that 
$$ R=1.$$

For $m \in \mathbb F_q^d$ and $m_{d+1}\in \mathbb F_q,$  we note that
$$ \mathcal{S}g(m, m_{d+1})= \frac{1}{q} \sum_{t\in \mathbb F_q^*} \chi(tm_{d+1}) \delta_{x_0}(tm) =\frac{1}{q} \sum_{t\in \mathbb F_q^*: x_0=tm} \chi(tm_{d+1}).$$
For $m \neq \mathbf{0}$, if $x_0 = t_0 m$ for some unique $t_0 \in \mathbb{F}_q^*$, then
$$\mathcal{S}g(m, m_{d+1}) = q^{-1} \chi(t_0 m_{d+1}).$$
Otherwise, $\mathcal{S}g(m, m_{d+1}) = 0$. 
Let $\ell_{x_0}^*$ denote the line through the origin and $x_0$, with the origin removed. Then we have
$$ |\mathcal{S}g(m, m_{d+1})|=\begin{cases} 
q^{-1} & \text{if }  m\in \ell^*_{x_0},  m_{d+1}\in \mathbb F_q,\\
0 & \text{otherwise.} 
\end{cases}$$
Hence, we obtain that
$$ L= \left(\sum_{m\in \ell^*_{x_0}, m_{d+1} \in \mathbb F_q} q^{-s}\right)^{\frac{1}{s}} \sim  q^{\frac{2-s}{s}}.$$
Since $L \lesssim R,$  we conclude that
$\frac{2-s}{s} \le 0,$  which yields  a necessary condition that
\begin{equation} \label{Go1}  \frac{1}{s}\le \frac{1}{2}.\end{equation}

\noindent \textbf{CASE 2}. We test the inequality \eqref{AssE} with $g=1_V$, where $V$ denotes a subspace with dimension $k\ge 1$, lying in $\mathbb F_q^d.$

It is not hard to see that
$$ R=|V|^{\frac{1}{p}}.$$

On the other hand, since $V$ is a subspace,  
$$\mathcal{S}g(m, m_{d+1})= \frac{1}{q} \sum_{t\in \mathbb F_q^*} \chi(tm_{d+1}) 1_{V}(tm) =\frac{1}{q} \sum_{t\in \mathbb F_q^*: m\in V} \chi(tm_{d+1}).$$
Using the orthogonality of $\chi$, this implies that
$$ |\mathcal{S}g(m, m_{d+1})|=\begin{cases} 
\frac{q-1}{q} & \text{if }  m\in V,  m_{d+1}=0,\\
q^{-1} & \text{if }  m\in V,  m_{d+1}\ne 0,\\
0 & \text{otherwise.} 
\end{cases}$$
Therefore, we obtain that
$$ L=\left( \sum_{m\in V, m_{d+1}=0}  \left(\frac{q-1}{q}\right)^s + \sum_{m\in V, m_{d+1}\in \mathbb F_q^*} q^{-s}\right)^{\frac{1}{s}}
\sim \left( |V| + |V| q^{1-s}\right)^{\frac{1}{s}}.$$
Since $s\ge 1,$ it follows that
$$ L\sim  |V|^{\frac{1}{s}}.$$
Since $L\lesssim R$ and $|V|\gtrsim q,$   we obtain another necessary  condition that
$$ \frac{1}{s} \le \frac{1}{p}.$$
Since $0\le \frac{1}{p}, \frac{1}{s}\le 1$,  combining the above condition together with \eqref{Go1} gives the conclusion of Theorem \ref{T2}, as required.

\section{Proof of Corollary \ref{SadCor} } \label{Sec4}
We will complete the proof using Proof by Contradiction. To this end, assume that there exists a constant $1 \le s = s_0 \le \infty$ satisfying \eqref{HRes} and \eqref{SgProblem} that is independent of all functions $g: \mathbb{F}_q^d \to \mathbb{C}$. That is, assume there exist uniform constants $C_1, C_2 > 0$ such that the following two inequalities hold for all functions $g: \mathbb{F}_q^d \to \mathbb{C}$:

\begin{equation}\label{HRes*}\|\widehat{\mathcal{S}g}\|_{L^2(H_j^d)} \le C_1\|\mathcal{S}g\|_{\ell^{s_0} (\mathbb F_q^{d+1})} \end{equation}
and
\begin{equation}\label{SgProblem*} \|\mathcal{S}g\|_{\ell^{s_0}(\mathbb F_q^{d+1})} \le C_2\|g\|_{\ell^{\frac{2d+4}{d+4} }(\mathbb F_q^d)}.\end{equation}

From Theorem \ref{MMain} and \eqref{SgProblem*}, we must have $2 \le s_0 \le \infty$. By the norm nesting property in \eqref{NNP},   it therefore follows that
$$\|\mathcal{S}g\|_{\ell^{s_0} (\mathbb F_q^{d+1})} \le \|\mathcal{S}g\|_{\ell^{2} (\mathbb F_q^{d+1})}. $$
Combining this inequality with \eqref{HRes*},  it must follow that for all functions $g$ on $\mathbb F_q^d,$
$$\|\widehat{\mathcal{S}g}\|_{L^2(H_j^d)} \le C_1\|\mathcal{S}g\|_{\ell^{2} (\mathbb F_q^{d+1})}. $$
In particular,  when $g_0(m):=\chi(a\cdot m)$ for a fixed $a\in S_j^{d-1} \subset \mathbb F_q^d \setminus \{\mathbf{0}\},$ it satisfies that
\begin{equation}\label{False}\|\widehat{\mathcal{S}g_0}\|_{L^2(H_j^d)} \le C_1\|\mathcal{S}g_0\|_{\ell^{2} (\mathbb F_q^{d+1})}. \end{equation}

We will complete the proof of  Corollary \ref{SadCor} by showing that the inequality \eqref{False} cannot hold. Since $\mathcal{S}g_0(m, m_{d+1})=q^{-1} \sum_{t\in \mathbb F_q^*} \chi(t(m_{d+1}+ a\cdot m))$ for $m\in \mathbb F_q^d$, $m_{d+1}\in \mathbb F_q,$  it follows by the orthogonality of $\chi$  that
$$ \mathcal{S}g_0(m, m_{d+1}) = \begin{cases} 
\frac{q-1}{q} & \text{if }   m_{d+1}+a\cdot m=0,\\
-q^{-1} & \text{if }  m_{d+1}+a\cdot m\ne 0.
\end{cases}$$
Form this observation,  we get
$$\|\mathcal{S}g_0\|_{\ell^{2} (\mathbb F_q^{d+1})} =\left(\sum_{\substack{m\in \mathbb F_q^d, m_{d+1}\in \mathbb F_q\\: m_{d+1}+ a\cdot m=0}} \left(\frac{q-1}{q}\right)^2 + \sum_{\substack{m\in \mathbb F_q^d, m_{d+1}\in \mathbb F_q\\: m_{d+1}+ a\cdot m\ne 0}} q^{-2}\right)^{\frac{1}{2}}.$$
Since $|\{(m, m_{d+1})\in \mathbb F_q^d \times \mathbb F_q: m_{d+1}+ a\cdot m=0\}|=q^d $   and $|\{(m, m_{d+1})\in \mathbb F_q^d \times \mathbb F_q: m_{d+1}+ a\cdot m\ne 0\}|= q^{d+1}-q^d \sim q^{d+1},$ we obtain that
\begin{equation}\label{Sade1}
\|\mathcal{S}g_0\|_{\ell^{2} (\mathbb F_q^{d+1})} \sim   ( q^d + q^{d-1})^{\frac{1}{2}} \sim  q^{\frac{d}{2}}.
\end{equation}

Now let us estimate the left-hand side $\|\widehat{\mathcal{S}g_0}\|_{L^2(H_j^d)}$ of the inequality \eqref{False}.
In the proof of Lemma 3.4 in \cite{KK26}, the following was proved:
$$\|\widehat{\mathcal{S}g_0}\|_{L^2(H_j^d)} \sim \|\widehat{g_0}\|_{L^2(S_j^{d-1})}.$$
By the orthogonality of $\chi$,  we see that
$$ \widehat{g_0}(x)=\begin{cases} 
0 & \text{if }   x\ne a,\\
q^d & \text{if }  x=a.
\end{cases}$$
Since $j\ne 0$ and $a\in S_j^{d-1},$  we have
$$ \|\widehat{\mathcal{S}g_0}\|_{L^2(H_j^d)} \sim \left(\frac{1}{|S_j^{d-1}|} \sum_{x\in S_j^{d-1}} |\widehat{g_0}(x)|^2\right)^{\frac{1}{2}}\sim  \left(\frac{1}{q^{d-1}}  q^{2d} \right)^{\frac{1}{2}} $$
Hence, we obtain that
$$\|\widehat{\mathcal{S}g_0}\|_{L^2(H_j^d)} \sim q^{\frac{d+1}{2}}. $$
From this estimate and \eqref{Sade1},  we conclude that the inequality \eqref{False} cannot be true. 

\section{The $\mathcal{S}$-operator problem for the radial test functions (proof of Theorem \ref{mainRR})} \label{Sec5}
In this section, we provide a complete proof of Theorem \ref{mainRR}.
We need to prove that the following two statements are true.

\begin{theorem}[Sufficient conditions] \label{TT1} 
 If  $(1/p, 1/s)$ lies on the convex hull of the points\\
 $(0,0), (1, 0), (1, d/(d+1)),$ and $(1/2, 1/2),$ then $\mathcal{S}_{\mathcal{R}_q}\left(p \to s\right) \lesssim 1.$ \end{theorem}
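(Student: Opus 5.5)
Since radial functions form a subspace of $\mathcal{F}(\mathbb F_q^d\to\mathbb C)$, interpolation still applies: the restriction of $\mathcal{S}$ to $\mathcal{R}_q$ is a linear operator on the finite-dimensional space of radial functions equipped with the $\ell^p(\mathbb F_q^d)$ norms. Because $\mathcal{S}_{\mathcal{R}_q}(p\to s)\le \mathcal{S}(p\to s)$, Theorem \ref{T1} already covers the convex hull of $(0,0),(1,0),(1,1/2),(1/2,1/2)$. By convexity and the norm nesting property \eqref{NNP}, it then suffices to prove the single new endpoint estimate
$$\mathcal{S}_{\mathcal{R}_q}\left(1\to \tfrac{d+1}{d}\right)\lesssim 1,$$
i.e.\ $\|\mathcal{S}g\|_{\ell^{(d+1)/d}(\mathbb F_q^{d+1})}\lesssim \|g\|_{\ell^1(\mathbb F_q^d)}$ for radial $g$. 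Interpolating this with the $(2\to2)$ and $(\infty\to\infty)$ bounds from Theorem \ref{mainlem02} fills the remaining quadrilateral. There is a subtlety: Riesz--Thorin on a subspace requires the complex interpolation of the restricted spaces, so we cannot simply cite Theorem \ref{Intp}. Instead we compose with a bounded projection. Averaging over the orthogonal group $O(d,\mathbb F_q)$ (whose orbits on nonzero-radius vectors are the spheres) is a contraction on every $\ell^p$ and maps onto radial functions. Hence $\mathcal{S}\circ P$ is defined on all functions and agrees with $\mathcal{S}$ on $\mathcal{R}_q$, so we can interpolate $\mathcal{S}\circ P$ instead. If the orbit structure on the null cone causes trouble, the alternative is to average directly over the sphere sets $S_j^{d-1}$, which is again a contraction.

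For the endpoint, write a radial $g=\sum_{j\in\mathbb F_q} c_j 1_{S_j^{d-1}}$, so that $\|g\|_{\ell^1}=\sum_j |c_j||S_j^{d-1}|$. By the triangle inequality in $\ell^{(d+1)/d}$, it suffices to show
$$\|\mathcal{S}1_{S_j^{d-1}}\|_{\ell^{(d+1)/d}}\lesssim |S_j^{d-1}| \quad\text{for each } j.$$
We compute $\mathcal{S}1_{S_j}(m,m_{d+1})=q^{-1}\sum_{t\ne0,\ t^2\|m\|=j}\chi(tm_{d+1})$, and treat three cases.

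\textbf{Case $j\neq0$.} The inner sum has at most two terms and is nonzero only if $\|m\|\ne0$ and $j/\|m\|$ is a square. There are $\sim q^{d}$ such $m$ (over $\sim q$ radii, each sphere of size $\sim q^{d-1}$), and $q$ choices of $m_{d+1}$, with $|\mathcal{S}g|\lesssim q^{-1}$. This gives $\ell^{s}$-norm $\lesssim (q^{d+1}q^{-s})^{1/s}=q^{(d+1)/s-1}$, which equals $q^{d-1}\sim|S_j|$ exactly when $s=(d+1)/d$. This is the source of the exponent.

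\textbf{Case $j=0$ with $m$ on the null cone.} Here the sum runs over all $t\in\mathbb F_q^*$, so by orthogonality $\mathcal{S}g$ equals $(q-1)/q$ at $m_{d+1}=0$ and $-1/q$ otherwise. The contribution is $\lesssim(|S_0|+|S_0|q\cdot q^{-s})^{1/s}\lesssim |S_0|^{1/s}\le|S_0|$ (the case $|S_0|=1$ is trivial, since then $g=\delta_0$ and $\mathcal{S}g\equiv0$ at the origin up to a harmless constant).

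\textbf{The origin $m=0$.} This point appears in both cases above and contributes at most $O(1)$.

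The main obstacle is the case $j=0$, when $d=2$ and $q\equiv1\pmod 4$ or $d\ge3$. There $|S_0^{d-1}|\sim q^{d-1}$, and we need $|S_0|^{d/(d+1)}\lesssim |S_0|$, which holds. So the difficulty is mainly bookkeeping; the genuinely delicate point is justifying the interpolation restricted to radial functions, handled by the projection argument above.
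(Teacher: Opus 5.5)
Your proof is correct. The reduction to the single endpoint $\mathcal{S}_{\mathcal{R}_q}(1\to\frac{d+1}{d})\lesssim 1$ is the same as in the paper, which simply cites interpolation, so your justification is a welcome addition.

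For that justification, use the sphere average from the start, i.e.\ the conditional expectation onto the partition $\{S_j^{d-1}\}_{j\in\mathbb F_q}$. Averaging over $O(d,\mathbb F_q)$ does not work directly: the group fixes $\mathbf 0$, so its orbit averages separate $\mathbf 0$ from the nonzero null vectors and are not radial when $|S_0^{d-1}|>1$. Equivalently, radial functions with the $\ell^p$ norm are just $L^p$ of $\mathbb F_q$ with weights $|S_j^{d-1}|$, where Riesz--Thorin applies verbatim.

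For the endpoint itself you take a genuinely different route.

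The paper's route:
\begin{itemize}
\item It normalizes a nonnegative radial $g$ with profile $(M_k)_{k\in\mathbb F_q}$.
\item It applies H\"older in $(m_{d+1},k)$ to pass from $\ell^{(d+1)/d}$ to $\ell^2$.
\item It uses orthogonality in $m_{d+1}$ to reduce to $q(q-1)\sum_k M_k^2$, and bounds this via $\max_k M_k\cdot\sum_k M_k\lesssim q^{-2d+2}$.
\item The exceptional case $d=2$, $q\equiv 3\pmod 4$ requires splitting off the $k=0$ term and computing it separately.
\end{itemize}

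Your route uses the fact that an $\ell^1$ bound need only be checked on the extreme points $1_{S_j^{d-1}}/|S_j^{d-1}|$, and then computes $\mathcal{S}1_{S_j^{d-1}}$ explicitly:
\begin{itemize}
\item For $j\ne0$ it has modulus at most $2/q$ on at most $q^{d+1}$ points. Hence $\|\mathcal{S}1_{S_j^{d-1}}\|_{\ell^{(d+1)/d}}\lesssim q^{d-1}\sim|S_j^{d-1}|$.
\item For $j=0$ it takes the values $\frac{q-1}{q}$ and $-\frac1q$ on $S_0^{d-1}\times\mathbb F_q$ and vanishes elsewhere. This gives the bound $|S_0^{d-1}|^{d/(d+1)}\le|S_0^{d-1}|$ with no case distinction.
\end{itemize}

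Your argument is shorter and uniform in $(d,q)$. It also shows that the test function $1_{S_1^{d-1}}$ from the necessity proof (Case 2 of Theorem \ref{TT2}) is extremal, so one computation gives both directions at $p=1$. The paper's argument instead treats an arbitrary radial profile at once through an $L^2$ bound in $m_{d+1}$, without using the extreme-point structure of the $\ell^1$ ball.

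Two harmless slips:
\begin{itemize}
\item $\mathcal{S}\delta_{\mathbf 0}$ is not zero at the origin; it lives on $\{\mathbf 0\}\times\mathbb F_q$ with the same two values. Your general $j=0$ bound already covers the case $|S_0^{d-1}|=1$.
\item For $j\ne0$ the origin contributes nothing, since $\mathcal{S}1_{S_j^{d-1}}(\mathbf 0,\cdot)=0$.
\end{itemize}
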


\begin{theorem} [Necessary conditions]\label{TT2} If $(d, q)\ne (2, ~3\pmod{4})$ and $\mathcal{S}_{\mathcal{R}_q}\left(p \to s\right) \lesssim 1$,  then $(1/p, 1/s)$ lies on the convex hull of the points $(0,0), (1, 0), (1, d/(d+1)),$ and $(1/2, 1/2).$
\end{theorem}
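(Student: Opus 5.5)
The plan is to test the assumed inequality $\|\mathcal{S}g\|_{\ell^s(\mathbb F_q^{d+1})}\lesssim \|g\|_{\ell^p(\mathbb F_q^d)}$ on two radial functions. Each test yields one linear constraint on $(1/p,1/s)$. I will then check that these two constraints, together with $0\le 1/p,1/s\le 1$, cut out exactly the convex hull of $(0,0),(1,0),(1,d/(d+1)),(1/2,1/2)$. The subspace indicators and delta functions used in the proof of Theorem \ref{T2} are not radial, so they cannot be reused.

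\textbf{Test 1: $g\equiv 1$ (radial).} Here $R=q^{d/p}$. By orthogonality, $\mathcal{S}g(m,m_{d+1})=\frac{q-1}{q}$ if $m_{d+1}=0$ and $-q^{-1}$ otherwise. Hence $L^s\sim q^d+q^{d+1-s}\sim q^d$, since $s\ge 1$, and so $L\sim q^{d/s}$. Letting $q\to\infty$ forces $1/s\le 1/p$.

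\textbf{Test 2: $g=\mathbf 1_{S_j^{d-1}}$ with $j\ne 0$.} By \eqref{sizeS}, $R\sim q^{(d-1)/p}$. We have $\mathcal{S}g(m,m_{d+1})=q^{-1}\sum_{t\ne0:\ t^2\|m\|=j}\chi(tm_{d+1})$. This vanishes unless $\|m\|\in j\cdot(\mathbb F_q^*)^2$, and in that case there are exactly two admissible values $\pm t_0$. The set $A$ of such $m$ is a union of about $(q-1)/2$ spheres of nonzero radius, so $|A|\sim q^{d}$ by \eqref{sizeS}. Rather than controlling $|\chi(t_0m_{d+1})+\chi(-t_0m_{d+1})|$ pointwise, which could be small for some $m_{d+1}$, I will use Parseval in the last variable. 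For each $m\in A$, $\sum_{m_{d+1}}|\mathcal{S}g|^2=2q^{-1}$, so $\sum|\mathcal{S}g|^2\sim q^{d-1}$. Combined with the pointwise bound $|\mathcal{S}g|\le 2/q$ this gives the required lower bound for $L$:
\begin{itemize}
\item if $s\ge 2$, Hölder over the $\sim q^{d+1}$ points gives $L^s\gtrsim q^{(d+1)(1-s/2)}q^{(d-1)s/2}=q^{d+1-s}$;
\item if $s<2$, we have $\sum|\mathcal{S}g|^s\ge (2/q)^{s-2}\sum|\mathcal{S}g|^2\gtrsim q^{d+1-s}$.
\end{itemize}
Thus $L\gtrsim q^{(d+1)/s-1}$, and $L\lesssim R$ forces
$$\frac{d+1}{s}-1\le \frac{d-1}{p},\qquad\text{i.e.}\qquad \frac1s\le \frac{1}{d+1}+\frac{d-1}{d+1}\cdot\frac1p .$$

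\textbf{Assembling the region.} The second constraint describes the line through $(1/2,1/2)$ and $(1,d/(d+1))$. It lies above the diagonal $1/s=1/p$ exactly when $1/p<1/2$. So the two constraints together with the unit box give precisely the stated convex hull. This is consistent with Theorem \ref{TT1}, which shows these boundaries are attained.

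\textbf{Expected obstacle and the role of the hypothesis.} The main delicate point is the lower bound in Test 2. The Parseval-plus-Hölder device is designed to avoid any cancellation issue in $\chi(t_0m_{d+1})+\chi(-t_0m_{d+1})$. The remaining input is the size count $|A|\sim q^d$, which rests on \eqref{sizeS}. The hypothesis $(d,q)\ne(2,3\pmod 4)$ is where sphere sizes can degenerate, namely $|S_0^{1}|=1$. I would double-check whether the argument needs it at all, since it only uses spheres of nonzero radius, or whether it enters only if one prefers the test function $\mathbf 1_{S_0^{d-1}}$.
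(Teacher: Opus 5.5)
Your proof is correct, and it departs from the paper's in two places.

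\textbf{The diagonal constraint.} To get $1/s\le 1/p$, the paper tests with $\mathbf 1_{S_0^{d-1}}$. That is the \emph{only} step where it needs $|S_0^{d-1}|\sim q^{d-1}$, and hence the hypothesis $(d,q)\ne(2,3\pmod 4)$. Your test $g\equiv 1$ gives the same constraint with no hypothesis at all. Note that $g\equiv 1$ is $\mathbf 1_V$ with $V=\mathbb F_q^d$, so it is one of the subspace indicators from the proof of Theorem~\ref{T2}. Your remark that those cannot be reused is therefore not quite accurate.

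\textbf{The second constraint.} You use the same test function as the paper, the indicator of a sphere of nonzero radius. The paper rescales $m_{d+1}$ and invokes Lemma~\ref{OSC}, which it verifies for even integer exponents and extends by interpolation. You instead compute $\sum_{m_{d+1}}|\mathcal{S}g|^2$ by Parseval. You then combine it with H\"older for $s\ge 2$, or with the pointwise bound $|\mathcal{S}g|\le 2/q$ for $s<2$. This gives $L\gtrsim q^{(d+1)/s-1}$ directly for every real $s\ge1$, with no cancellation analysis of $\chi(t_0m_{d+1})+\chi(-t_0m_{d+1})$.

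\textbf{Your closing question.} Your argument answers it: the hypothesis is not needed.
\begin{itemize}
\item $|S_j^{d-1}|\sim q^{d-1}$ holds for every $j\neq 0$ in all cases.
\item When $d=2$ and $q\equiv 3\pmod 4$, each of the $(q-1)/2$ circles making up $A$ has $q+1$ points, so $|A|\sim q^2$.
\end{itemize}
Nothing else depends on $(d,q)$. So your argument proves the necessity statement for all $d\ge 2$ and all odd $q$, and hence the converse in Theorem~\ref{mainRR} without its extra assumption.
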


\subsection{Proof of Theorem \ref{TT1}} Since $\mathcal{S}_{\mathcal{R}_q}\left(p \to s\right) \le \mathcal{S}\left(p \to s\right),$  Theorem \ref{MMain} implies that  if $(1/p, 1/s)$ lies on the convex hull of the points $(0,0), (1, 0), (1, 1/2),$ and $(1/2, 1/2),$ then $\mathcal{S}_{\mathcal{R}_q}\left(p \to s\right) \lesssim 1.$ Therefore,  to complete the proof of Theorem \ref{TT1},  it suffices by the interpolation theorem  to show that  
\begin{equation*}\mathcal{S}_{\mathcal{R}_q}\left(1 \to \frac{d+1}{d}\right)  \lesssim 1.\end{equation*}
In other words,  for any radial functions $g: \mathbb F_q^d \to \mathbb C,$  our task is to prove that
$$ \|\mathcal{S}g\|_{\ell^{\frac{d+1}{d}}(\mathbb F_q^{d+1})}\lesssim \|g\|_{\ell^1(\mathbb F_q^d)}.$$
Since the operator $\mathcal{S}$ is a linear operator, without loss of generality,  we may assume that the function $g:\mathbb F_q^d\to \mathbb C$ is a radial function with nonnegative real values. Furthermore,  by normalizing the function $g$, we also assume that 
$$\|g\|_{\ell^1(\mathbb F_q^d)}=1.$$
Letting $g(m)=M_j\ge 0$ if $\|m\|=j\in \mathbb F_q,$  the above condition becomes
$$ 1=\sum_{m\in \mathbb F_q^d} |g(m)|= \sum_{j\in \mathbb F_q} \sum_{m\in S_j^{d-1}} M_j =\sum_{j\in \mathbb F_q} |S_j^{d-1}| M_j.$$ 
Invoking the size for the spheres $S_j^{d-1}$ in \eqref{sizeS}, this condition is equivalent to 
\begin{equation} \label{Aim1} 1  \sim \begin{cases} M_0 +
q \sum\limits_{j\in \mathbb F_q^*} M_j &\text{if }  d=2,~ q\equiv 3\pmod{4}\\
q^{d-1} \sum\limits_{j\in \mathbb F_q} M_j &\text{otherwise.} 
\end{cases}\end{equation}
Under this assumption, our aim is to show that
$$ \|\mathcal{S}g\|_{\ell^{\frac{d+1}{d}}(\mathbb F_q^{d+1})}\lesssim 1. $$
Thus, the proof reduces to showing that under the assumption \eqref{Aim1}
\begin{equation}  
L:= \|\mathcal{S}g\|^{ \frac{d+1}{d}}_{\ell^{\frac{d+1}{d}}(\mathbb F_q^{d+1})} = \sum_{m\in \mathbb F_q^d, m_{d+1}\in \mathbb F_q} \left| \frac{1}{q} \sum_{t\in \mathbb F_q^*} \chi(tm_{d+1}) g(tm)\right|^{\frac{d+1}{d}} \lesssim 1.
\end{equation}
Notice that $ \mathbb F_q^d=\bigcup_{k\in \mathbb F_q} S_k^{d-1}$ and $\|tm\|=t^2k$ for $t\in \mathbb F_q^*, m\in S_k^{d-1}.$ It follows that
\begin{equation}\label{FirstCase} L=q^{-\frac{d+1}{d}} \sum_{m_{d+1}\in \mathbb F_q} \sum_{k\in \mathbb F_q} |S_k^{d-1}| \left|\sum_{t\in \mathbb F_q^*} \chi(tm_{d+1}) M_{t^2k}\right|^{\frac{d+1}{d}}.\end{equation}

\noindent \textbf{CASE 1}. Assuming that $(d, q)\ne (2,~3\pmod{4}),$ we show that  $L\lesssim 1.$ Note from \eqref{sizeS} that  in this case,
 $|S_k^{d-1}|\sim q^{d-1}$ for all $k\in \mathbb F_q$.   Hence, it follows that
$$ L \sim q^{-\frac{d+1}{d}} q^{d-1} \sum_{m_{d+1}, k\in \mathbb F_q} \left|\sum_{t\in \mathbb F_q^*} \chi(tm_{d+1}) M_{t^2k}\right|^{\frac{d+1}{d}}.$$
Using H\"{o}lder's inequality,  we obtain that
\begin{align*} L&\lesssim q^{-\frac{d+1}{d}} q^{d-1 } \left(\sum_{m_{d+1}, k\in \mathbb F_q} 1^{\frac{2d}{d-1}}\right)^{\frac{d-1}{2d}}  \left(\sum_{m_{d+1}, k\in \mathbb F_q}\left|\sum_{t\in \mathbb F_q^*} \chi(tm_{d+1}) M_{t^2k}\right|^2\right)^{\frac{d+1}{2d}}\\
&= q^{-\frac{d+1}{d}} q^{d-1} q^{\frac{d-1}{d}} \left(\sum_{m_{d+1}, k\in \mathbb F_q}\sum_{t, t'\in \mathbb F_q^*} \chi(m_{d+1}(t-t')) M_{t^2k} M_{t'^2 k}\right)^{\frac{d+1}{2d}}.\end{align*}
Now, computing the sum over the variable $m_{d+1}\in \mathbb F_q$ by the orthogonality of $\chi$,  it follows that
$$ L\lesssim q^{-\frac{d+1}{d}} q^{d-1} q^{\frac{d-1}{d}} q^{\frac{d+1}{2d}} \left(\sum_{k\in \mathbb F_q}\sum_{t\in \mathbb F_q^*} M_{t^2k}^2\right)^{\frac{d+1}{2d}}.$$
By a simple change of variables, it is clear that $\sum_{k\in \mathbb F_q}\sum_{t\in \mathbb F_q^*} M_{t^2k}^2=(q-1) \sum_{k\in \mathbb F_q}M_{k}^2.$
It therefore follows that
\begin{equation} \label{MKE}L\lesssim q^{\frac{d^2-1}{d}} \left(\sum_{k\in \mathbb F_q}M_{k}^2\right)^{ \frac{d+1}{2d}}.\end{equation}
From \eqref{Aim1}, we may assume that  $\sum_{k\in \mathbb F_q} M_k\sim q^{-d+1}$ and thus $M_k\lesssim q^{-d+1}$ for all $k\in \mathbb F_q.$ Using these conditions, we observe that
$$\sum_{k\in \mathbb F_q}M_{k}^2 \lesssim q^{-d+1} q^{-d+1} =q^{-2d+2}.$$
Combining this observation and \eqref{MKE},  we obtain the desired result,  $L\lesssim 1.$\\

\noindent\textbf{CASE 2}. Assume $d=2$ and $q\equiv 3\pmod{4}.$ Let us prove that $L\lesssim 1.$  In this case,  it follows by  \eqref{sizeS} and \eqref{FirstCase} that 
\begin{align*} L&\sim  q^{-\frac{3}{2}} \sum_{m_{3}\in \mathbb F_q}  \left|\sum_{t\in \mathbb F_q^*} \chi(tm_{3}) M_{0}\right|^{\frac{3}{2}}  + q^{-\frac{1}{2}} \sum_{m_{3}\in \mathbb F_q} \sum_{k\in \mathbb F_q^*}  \left|\sum_{t\in \mathbb F_q^*} \chi(tm_{3}) M_{t^2k}\right|^{\frac{3}{2}}\\
&=:I + II.\end{align*}
It suffices to show that $I\lesssim 1$  and $II\lesssim 1.$
Since $II \lesssim 1$ is easily proven by exactly the same process as in Case 1, we omit the details of the proof.  

Since $\chi(0)=1$, by using the orthogonality of $\chi$,  we obtain that
\begin{align*} I&=q^{-\frac{3}{2}} M_{0}^{\frac{3}{2}}\sum_{m_{3}\in \mathbb F_q}  \left|\sum_{t\in \mathbb F_q^*} \chi(tm_{3}) \right|^{\frac{3}{2}} \\
&= q^{-\frac{3}{2}} M_{0}^{\frac{3}{2}} (q-1)^{\frac{3}{2}} 
+q^{-\frac{3}{2}} M_{0}^{\frac{3}{2}} (q-1) \sim  M_{0}^{\frac{3}{2}},\end{align*}
where  we computed separately for the cases $m_3 = 0$ and $m_3 \neq 0$.
 Using \eqref{Aim1} with the assumption of Case 2, we see that  $1\sim M_0 + q \sum_{k\in \mathbb F_q^*} M_k.$  This implies $M_0\lesssim 1$.
Therefore, we obtain the required result, $I\lesssim 1$.
 
\subsection{Proof of Theorem \ref{TT2}}
We begin with the following lemma. 
\begin{lemma} \label{OSC} For any non-negative integer $n$,  we have
$$ \sum_{t\in \mathbb F_q} | \chi(t) + \chi(-t)|^n \sim q.$$ 
\end{lemma}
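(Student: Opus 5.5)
Write $\chi(t)+\chi(-t)=2\,\mathrm{Re}\,\chi(t)$, so each summand is $|2\cos(2\pi \mathrm{Tr}(t)/p)|^n \le 2^n$, where $p$ is the characteristic of $\mathbb F_q$. Since $n$ is fixed and independent of $q$, the upper bound $\sum_{t\in\mathbb F_q}|\chi(t)+\chi(-t)|^n\le 2^n q\lesssim q$ is immediate. The real task is the lower bound: I must show that a positive proportion of $t\in\mathbb F_q$ have $|\chi(t)+\chi(-t)|$ bounded below by an absolute constant. For $n=0$ the sum is exactly $q$, so I assume $n\ge1$.

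\textbf{Lower bound via trace fibres.} The trace $\mathrm{Tr}:\mathbb F_q\to\mathbb F_p$ is a surjective $\mathbb F_p$-linear map, so every fibre $\{t:\mathrm{Tr}(t)=a\}$ has exactly $q/p$ elements. Therefore
$$\sum_{t\in\mathbb F_q}|\chi(t)+\chi(-t)|^n=\frac{q}{p}\sum_{a\in\mathbb F_p}\bigl|2\cos(2\pi a/p)\bigr|^n .$$
It now suffices to show that $\frac1p\sum_{a=0}^{p-1}|2\cos(2\pi a/p)|^n\gtrsim 1$ uniformly in $p$.

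\textbf{Uniform estimate in $p$.} The term $a=0$ alone contributes $2^n/p$, which settles any bounded range of $p$. For general $p$, I count the $a\in\{0,\dots,p-1\}$ with $a/p\in[0,1/8]\cup[7/8,1)$. For these $a$ we have $|2\cos(2\pi a/p)|\ge 2\cos(\pi/4)=\sqrt2\ge1$. Both intervals contain at least $\lfloor p/8\rfloor+1\ge p/8$ integers in total, so the average is at least $1/8$.

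\textbf{Point needing care.} The only subtlety is whether the averaged bound holds uniformly for small $p$, in particular $p=2$. There $\chi(t)=\pm1$ and $\chi(-t)=\chi(t)$, so every summand equals $2^n$ and the sum is $2^nq$. This is also covered by the $a=0$ term argument above.

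Combining the upper and lower bounds gives $\sum_{t\in\mathbb F_q}|\chi(t)+\chi(-t)|^n\sim q$, with implied constants depending only on $n$.
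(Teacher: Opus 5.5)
Your proof is correct, but it takes a different route from the paper.

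The paper argues algebraically for even $n$. It expands $(\chi(t)+\chi(-t))^n$ binomially into terms $\binom{n}{i}\chi((2i-n)t)$ and uses orthogonality to keep only the middle term, obtaining $\binom{n}{n/2}q$. It then reduces odd $n$ to even $n$ by appeal to interpolation.

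You instead use the explicit form $2\cos(2\pi\,\mathrm{Tr}(t)/p)$ and the fact that all trace fibres have size $q/p$. This reduces the problem to the one-variable average $\frac1p\sum_{a=0}^{p-1}|2\cos(2\pi a/p)|^n$, which you bound below by $1/8$ by counting the $a$ with $0\le a\le p/8$.

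The paper's method never needs a formula for $\chi$, and it gives the exact constant $\binom{n}{n/2}$ when $p>n$. In small characteristic, however, additional terms with $p\mid 2i-n$ survive; for $p=2$ all of them do, giving $2^nq$. So its identity is really the two-sided bound $\binom{n}{n/2}q\le\sum_t|\chi(t)+\chi(-t)|^n\le 2^nq$. Its odd case also implicitly needs a H\"older/log-convexity step rather than Riesz--Thorin proper.

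Your argument covers all characteristics and all exponents at once, with explicit constants $q/8$ and $2^nq$. In fact it works for any real $n\ge 0$, which is what the later application in the proof of Theorem~\ref{TT2} actually uses, since there the exponent is a real $s$. Relying on the principal character costs nothing: every nontrivial additive character has the form $t\mapsto\chi(bt)$ with $b\ne0$, and the sum is invariant under $t\mapsto bt$.
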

\begin{proof}
By the interpolation theorem,  it suffices to show that  for any even number $n$,  
$$ \sum_{t\in \mathbb F_q} | \chi(t) + \chi(-t)|^n \sim q.$$ 
Since $\chi(t)+\chi(-t)$ is a real number,   we see that  
$|\chi(t)+\chi(-t)|^n= (\chi(t)+\chi(-t))^n $  for any even integer $n.$
Hence, by the binomial theorem,  it follows that
\begin{align*}  \sum_{t\in \mathbb F_q} | \chi(t) + \chi(-t)|^n &= \sum_{t\in \mathbb F_q} ( \chi(t) + \chi(-t))^n\\
&=\sum_{t\in \mathbb F_q} \sum_{i=0}^n   {n\choose i} \chi^i(t)  \chi^{n-i}(-t) = \sum_{i=0}^n {n\choose i} \sum_{t\in \mathbb F_q} \chi((2i-n)t) = {n\choose n/2} q, \end{align*}
as required.
\end{proof} 

We now start proving Theorem \ref{TT2}. 
Assume that for $1\le p, s\le \infty,$ 
$$ L:=\|\mathcal{S}g\|_{\ell^s(\mathbb F_q^{d+1})} \lesssim \|g\|_{\ell^p(\mathbb F_q^d)}=:R,$$
where $g$ is any radial function on $\mathbb F_q^d.$
This is the same as the following:
$$ L= \left( \sum_{m\in \mathbb F_q^d, m_{d+1}\in \mathbb F_q} \left| \frac{1}{q} \sum_{t\in \mathbb F_q^*} \chi(tm_{d+1}) g(tm)\right|^s\right)^{\frac{1}{s}} \lesssim  \left(\sum_{m\in \mathbb F_q^d} |g(m)|^p\right)^{\frac{1}{p}}=R.$$
It is clear that  $1_{S_j^{d-1}}$,  the indicator function of the sphere with a radius $j\in \mathbb F_q,$ is a radial function on $\mathbb F_q^d.$
We test this inequality with  $g=1_{S_j^{d-1}}.$  
Then it follows that
\begin{equation*} R=|S_j^{d-1}|^{\frac{1}{p}}.\end{equation*}
On the other hand, it follows that
\begin{equation} \label{KLDef} L= \frac{1}{q} \left(\sum_{m\in \mathbb F_q^d, m_{d+1}\in \mathbb F_q } \left| \sum_{t\in \mathbb F_q^*:t^2\|m\|=j} \chi(m_{d+1}t) \right|^s\right)^{\frac{1}{s}}.\end{equation}
We assume that $(d, q)\ne (2, ~3\pmod{4}).$  Then we see from \eqref{sizeS} that 
$|S_j^{d-1}|\sim q^{d-1}$ for all $j\in \mathbb F_q.$ Hence,  we have
\begin{equation} \label{R} R\sim q^{\frac{d-1}{p}}.
\end{equation}
\noindent\textbf{CASE 1}. We consider the case where  $j=0$ in \eqref{KLDef}. Then we have
$$ L=\frac{1}{q} \left(\sum_{m\in \mathbb F_q^d, m_{d+1}\in \mathbb F_q: \|m\|= 0 } \left| \sum_{t\in \mathbb F_q^*} \chi(m_{d+1}t) \right|^s\right)^{\frac{1}{s}}$$
$$= \frac{1}{q} \left(|S_0^{d-1}|\sum_{ m_{d+1}\in \mathbb F_q} \left| q \delta_0(m_{d+1})-1 \right|^s\right)^{\frac{1}{s}}.$$
Since $|S_0^{d-1}|\sim q^{d-1}$, 

$$ L= q^{\frac{d-1}{s}-1} \left( (q-1)^s+ \sum_{m_{d+1}\ne 0} 1\right)^{\frac{1}{s}}=q^{\frac{d-1}{s}-1} \left( (q-1)^s+ q-1 \right)^{\frac{1}{s}}.$$
Hence, for $ s\ge 1$, we see that
$L \sim q^{\frac{d-1}{s}}.$
Since $L\lesssim R,$  this estimate and \eqref{R} yield that if $(d, q)\ne (2, ~3\pmod{4})$, then a necessary condition for $\mathcal{S}(p\to s)\lesssim 1$ is
\begin{equation} \label{Nec1}  1\le p\le s\le \infty. \end{equation}

\noindent\textbf{CASE 2}. We take $j=1$ in \eqref{KLDef}. Then we have
$$ L=\frac{1}{q} \left(\sum_{m\in \mathbb F_q^d, m_{d+1}\in \mathbb F_q: \eta(\|m\|)=1 }  
\left| \chi\left(\frac{m_{d+1}}{\sqrt{\|m\|}}\right) +   \chi\left(-\frac{m_{d+1}}{\sqrt{\|m\|}}\right)\right|^s\right)^{\frac{1}{s}},  $$
where $\eta: \mathbb{F}_q \to \{-1, 0, 1\}$ denotes the quadratic character defined by
$$\eta(t) = \begin{cases}
\ \ 1 & \text{if } t \text{ is a square in } \mathbb{F}_q^*, \\
-1 & \text{if } t \text{ is not a square in } \mathbb{F}_q^*,\\
\ \ 0 & \text{if } t=0.
\end{cases}$$
Observe that  $ |\{m\in \mathbb F_q^d: \eta(\|m\|)=1\}|\sim q^{d}.$
Since $\|m\|\ne 0$ for $m\in \mathbb F_q^d$ with $\eta(\|m\|)=1,$ we are able to apply a change of variables by replacing $m_{d+1}$ by $\sqrt{\|m\|} m_{d+1}.$ It follows that
$$ L= \frac{1}{q} \left( \sum_{m\in \mathbb F_q^d: \eta(\|m\|)=1} \sum_{m_{d+1}\in \mathbb F_q} \left| \chi(m_{d+1})+ \chi(-m_{d+1})\right|^s\right)^{\frac{1}{s}}$$
$$\sim q^{\frac{d}{s}-1} \left(\sum_{m_{d+1}\in \mathbb F_q} \left| \chi(m_{d+1})+ \chi(-m_{d+1})\right|^s\right)^{\frac{1}{s}} \sim q^{\frac{d+1}{s}-1},$$
where the last estimate follows immediately from Lemma  \ref{OSC}.
Since $L\lesssim R,$   the above estimate and \eqref{R} give a necessary condition for $\mathcal{S}(p\to s)\lesssim 1$ as follows: 
\begin{equation}\label{Nec2} \frac{d+1}{s}-\frac{d-1}{p}\le 1.\end{equation}

The proof of Theorem \ref{TT2}  is completed by direct calculation from the results of \eqref{Nec1} and \eqref{Nec2}.

\backmatter

\bmhead{Acknowledgements}
D. Koh and C. Yang were supported by the National Research Foundation of Korea(NRF) grant funded by the Korea government(MSIT) (NO.~RS-2023-00249597) and (No.~2021R1C1C1005700), respectively.

\end{document}